\documentclass{article}
\usepackage[a4paper]{geometry}

\usepackage{amsthm}
\usepackage{amsopn}
\usepackage{amsmath}
\usepackage{bbm}
\usepackage{amssymb,mathrsfs}


\usepackage[colorlinks=true,linkcolor=blue]{hyperref}
\usepackage{enumitem}


\DeclareMathOperator{\Hom}{Hom}

\DeclareMathOperator{\Spec}{Spec}

\DeclareMathOperator{\Sym}{Sym}

\def\cA{\mathscr{A}}

\def\cH{\mathscr{H}}

\def\cJ{\mathscr{J}}

\def\cL{\mathscr{L}}

\def\cO{\mathscr{O}}

\def\fg{\mathfrak{g}}

\swapnumbers
\newtheoremstyle{exps}{\topsep}{\topsep}{}{0pt}{\bfseries}{.}{0pt}{}

\newtheorem*{thm*}{Theorem}
\newtheorem*{prop*}{Proposition}
\newtheorem*{lem*}{Lemma}
\newtheorem*{cor*}{Corollary}
\newtheorem*{rem*}{Remark}
\newtheorem{thm}{Theorem}[section]
\newtheorem{prop}[thm]{Proposition}

\newtheorem{ex}[thm]{Example}

\theoremstyle{definition}
\newtheorem*{defn*}{Definition}
\newtheorem*{exer*}{Exercise}
\newtheorem*{ex*}{Example}

\newtheorem*{problem*}{Problem}
\newtheorem{rem}[thm]{Remark}
\newtheorem{nolabel}[thm]{ }

\theoremstyle{exps}

\numberwithin{equation}{thm}

\setenumerate[1]{label={\alph*)}} 

\usepackage{slashed}
\usepackage{authblk}

\def\susy{\slashed{D}}

\DeclareMathOperator{\maps}{Map}
\def\fl{\mathfrak{l}}
\def\Xe{{{X\times T^2}}}
\def\brXe{{{(X\times T^2)}}}
\def\fge{{{\fg\oplus \mathfrak t^2}}}
\def\brfge{{{(\fg\oplus \mathfrak t^2)}}}
\def\Ge{{{G\times \mathbb R^2}}}
\def\brGe{{{(G\times \mathbb R^2)}}}
\def\cLe{{{\cL\oplus \cL_{T^2}}}}
\def\brcLe{{{(\cL\oplus \cL_{T^2})}}}
\def\cHe{{{\cH\otimes \cH_{T^2}}}}
\def\Ve{{{V\oplus \mathbb R}}}
\def\brVe{{{(V\oplus \mathbb R)}}}
\def\Lambdae{{{\Lambda\oplus \mathbb Z}}}
\def\brLambdae{{{(\Lambda\oplus \mathbb Z)}}}
\def\Gammae{{{\Gamma\oplus \mathbb Z^2}}}
\def\brGammae{{{(\Gamma\oplus \mathbb Z^2)}}}
\def\vac{|0\rangle}
\def\cV{\mathcal{V}}
\def\super{{\rm super}}
\begin{document}
\title{On a complex-symplectic mirror pair}
\author[1]{Aldi, M}
\author[2]{Heluani, R.} 
\affil[1]{Virginia Commonwealth University}
\affil[2]{IMPA}
\date{}
\maketitle

\abstract{We study the canonical Poisson structure on the loop space of the super-double-twisted-torus and its quantization. As a consequence we obtain a rigorous construction of mirror symmetry as an intertwiner of the N=2 super-conformal structures on the super-symmetric sigma-models on the Kodaira-Thurston nilmanifold and a gerby torus of complex dimension 2. As an application we are able to identify global moduli of equivariant generalized complex structures on these target spaces with moduli of equivariant orthogonal complex structures on the doubled geometry.}

\section{Introduction}
\begin{nolabel}\label{no:first-section}
The full (including all winding sectors) bosonic sigma-model on the 3-dimensional Heisenberg nilmanifold was quantized in \cite{heluanaldi} by combining Hamiltonian quantization,  double field theory (DFT)\cite{hull-zwiebach} and a detailed analysis of the loop space of the so-called double twisted torus \cite{hull-reid-edwards}. The latter is a $T^3$-fibration over $T^3$ which makes the T-duality between the Heisenberg nilmanifold and a $3$-torus $T^3$ twisted by a 3-form flux a manifest symmetry of the theory.

In this paper we continue the work of \cite{heluanaldi} by including fermionic fields to obtain a super-symmetric quantum theory. Just as dilogarithmic singularities in the OPEs prevent the full sigma-model with target the 3-dimensional Heisenberg nilmanifold from being encoded by a vertex algebra, the super-symmetric quantum theory cannot be fully formulated in the language of vertex super-algebras. These systems have been actively studied in relations to both vertex operator algebras and dilogarithmic singularities in the last few years, see \cite{lust, szabo, mathai-linshaw, mathai-hekmati} and references therein.

To exploit the well-known correspondence between super-symmetric sigma-models and generalized complex structures on even-dimensional targets \cite{roceck-zabzine, gates-hull-roceck}, we take direct products of the T-dual targets considered in \cite{heluanaldi} with a circle. As a result, we consider a T-dual pair $(Y,N)$ where $Y$ is a 4-torus twisted by a 3-form and $N$ is a 4-dimensional nilmanifold $N$ with the topology of a $S^1$-fibration over $T^3$ (or equivalently of a $T^2$-fibration of a $T^2$-base). While it is well-known that $N$ admits both symplectic and complex structures, it admits no K\"ahler structure since its first Betti number is odd \cite{thur}. 

One of the goals of this paper is to understand, at the level of the full super-symmetric quantum theory, how T-duality intertwines generalized complex structures on $N$ and $Y$. In particular we are interested in the mirror symmetry obtained by T-duality on the $T^2$-fibers of $N$. In this case, symplectic structures on $Y$ with respect to which the $T^2$-fibers are Lagrangian are intertwined with complex structures on $Y$ with respect to which the $3$-form is of type $(2,1)+(1,2)$.    

Each of these generalized complex structures gives rise to an action of the $N=2$ super-conformal algebra of central charge $12$ on the zero-winding sectors of the super-symmetric quantum sigma-model with target the underlying manifold. Since T-duality intertwines winding and momentum operators, a full understanding of mirror symmetry for these sigma-models requires adding winding sectors to both sides. In the spirit of DFT, we address this problem by lifting the T-dual super-symmetric sigma-models to an 8-dimensional nilmanifold $\Xe$ where $X$ is the double twisted torus considered in \cite{heluanaldi} and  $T^2$ is a $2$-Torus. Geometrically, both the symplectic structure on $N$ and the complex structure on $Y$ can be realized as complex structures on $\Xe$. We show that the corresponding $N=2$ super-conformal structures also admits lifts to $\Xe$ which are then intertwined by the manifest T-duality.

More generally, complex structures on $\Xe$ which are orthogonal with respect to the tautological pairing on $\brXe$ are in correspondence with generalized complex structures on both $N$ and $Y$ or, equivalently, with $N=2$ super-conformal structures on the corresponding sigma models. 
\end{nolabel}

\begin{nolabel}
The paper is organized as follows. In Section \ref{sec:super-loops} we describe a Poisson structure on the space of super-loops on the double twisted torus and its quantization. In particular, we show that bosonic and fermionic sectors of the quantum theory do not interact. In Section \ref{sec:super-conformal} we add the auxiliary circle directions and explain the relationship between $N=2$ super-conformal structures on the resulting quantum theory on the one hand and complex structures on the $8$-dimensional nilmanifold $\Xe$ on the other. In Section \ref{sec:mirror} we reinterpret these $N=2$ super-conformal structures from the point of view of the T-dual pair of $4$-dimensional targets $(Y,N)$. We also describe global moduli of $N=2$ super-conformal structures both from the $8$-dimensional and $4$-dimensional perspectives.  
\end{nolabel}

{\noindent}{\bf Acknowledgments:} The work of M.A.\ was supported in part by the NSF FRG grant DMS-0854965. R.H. is supported in part by CNPq and Faperj. M.A.\ would like to thank IMPA for hospitality and excellent work conditions while the present work was in the initial stages of preparation. 

\section{Super-loops on the double twisted torus}\label{sec:super-loops}

\begin{nolabel}
Let $X$ be a Poisson manifold and let $Y$ be an arbitrary smooth manifold. The space of smooth maps $\maps(Y,X)$ can be regarded as an infinite-dimensional Poisson manifold in the following way. Let $\{y^i\}$ be local coordinates on $Y$ and consider $\phi = \phi(y^i) \in \maps(Y,X)$. Given local coordinates $\{x^\mu\}$ on $X$, $\phi^\mu = \phi^\mu(y^i) = x^\mu \circ \phi$ may be regarded as local coordinates on $\maps(Y,X)$. The algebra $\mathcal{V}$ generated by $\phi^\mu(y^i)$ and its derivatives $\partial_{y^{i_1}} \dots \partial_{y^{i_k}} \phi^\mu(y^i)$ is equipped with the local bracket \cite{fadeev-tak} such that
\begin{equation} \left\{ \varphi^\mu(y), \phi^\nu(y') \right\} = \left\{ x^\mu, x^\nu \right\}(y') \delta(y - y'), 
\label{eq:bracket1}
\end{equation}
where $\delta(y-y')$ is the functional $\int_Y dy' \delta(y-y') f(y') =  f(y)$, and (\ref{eq:bracket1}) is extended to all of $\mathcal{V}$ by the Leibniz rule. The space of \emph{local functionals} on $\maps(Y,X)$ is the quotient by total derivatives $\cA = \mathcal{V}/(\sum_i \partial_{y^i} \mathcal V)$. The quotient map is denoted by $f \mapsto \int_Y f$ which allows us to write local functionals as $
\int_Y dy f(\phi, \partial \phi, \dots)$. Integrating by parts, the bracket \eqref{eq:bracket1} descends to a Lie bracket on $\cA$. Since the product of local functionals is not in general a local functional, $\cA$ is not quite a Poisson algebra. Instead,  \eqref{eq:bracket1} endows the space $\mathcal{V}$ with the structure of a \emph{Poisson vertex algebra} or \emph{Coisson algebra} \cite{barakat}. 

The prototypical example of this situation occurs when $X = T^* Z$ with its canonical Poisson structure and $Y= S^1$. In this case, (\ref{eq:bracket1}) is equivalent to the standard \emph{symplectic} structure on the loop space $T^*\cL Z = \cL X$. 

This construction works with minor sign modifications in the \emph{super} situation. The main focus of this paper is the case when $Y = S^{1|1}$ is a super-circle and thus $\maps(Y,X)$ can be thought of as the \emph{super-loop space} $\cL^sX$ of $X$. In this section, we detail the construction of the resulting Poisson vertex super-algebra structure in the particular case where $X$ is a certain nilmanifold of real dimension $6$. 
\label{no:loop-spaces-pvas}
\end{nolabel}
\begin{nolabel}\label{no:derivationsuper}
In the notation of \ref{no:loop-spaces-pvas}, let $\sigma$ be a coordinate on $Y=S^1=\mathbb{R}/\mathbb{Z}$ and let $X$ be of dimension $1$ with local coordinate $x$. The vector field $\partial_\sigma$ gives rise to a derivation of $\cV$ according to the formula
\[ 
\partial = \frac{\delta }{\delta \sigma} := \sum_{k \geq 0}  \phi^{(k+1)}  \frac{\partial}{ \partial \phi^{(k)}}\,,
\]
where $\phi^{(k)}(\sigma) = \partial^k_\sigma \phi(\sigma)$. The other case of interest is the super-circle $Y=S^{1|1}$ with even coordinate $\sigma$ and odd coordinate $\zeta$. In this case, $\partial_\sigma$ admits an odd square root $\susy$ corresponding to the odd vector field $\partial_\zeta + \zeta \partial_\sigma$. In other words, $\susy$ is an odd endomorphism of the $\mathbb{Z}/2\mathbb{Z}$-graded algebra $\cV$ such that $\susy^2 = \partial$. The resulting structure on $\cV$ is referred to as an $N_K=1$ SUSY Poisson vertex algebra in \cite{heluani3}. 
\end{nolabel} 
\begin{nolabel}
In the Hamiltonian approach to the sigma-model, one is interested in \emph{quantizing} the super-manifold $\maps(Y,X)$. This amounts to constructing a Hilbert super-space $\mathcal H$ and replacing the \emph{coordinates} $\phi^\mu(y)$ (or more generally the elements of $\mathcal{V}$) with ${\rm End}(\mathcal H)$-valued distributions whose commutators are dictated by the \emph{Poisson brackets} \eqref{eq:bracket1}. For instance, let $Y = S^1$ and $X = S^1 \times S^1$. If $\sigma$ is a coordinate on $Y$ and $(x^0,x_0^*)$ are coordinates on $X$, the general loop in $\maps(Y,X)$ can be expanded using formal Fourier series as
\[ 
\phi(\sigma) = \phi^0(\sigma) = \omega^0 \sigma + \sum_{n \in \mathbb{Z}} x^n e^{- 2 \pi \sqrt{-1} n \sigma}, \qquad \phi^*(\sigma)=  \phi^1(\sigma) = \omega_0^* \sigma +  \sum_{n \in \mathbb{Z}} x^*_n e^{- 2 \pi \sqrt{-1} n \sigma}, 
\]
where $\omega^0, \omega_0^*\in \mathbb{Z}$, $x^n, x_n^* \in \mathbb{R}$, $n \neq 0$ and $x^0, x^*_0$ are well defined modulo $\mathbb Z$. If $X$ is endowed with the symplectic structure such that $\{x^*_0, x^0\} = 1$, \eqref{eq:bracket1} reads 
\[ 
\{x^*_m, x^n\} = \frac{1}{m} \delta_{m,-n} \quad m,n \neq 0, \qquad \left\{ \omega_0^*, x^0 \right\} = \left\{ x^*_0, \omega^0 \right\} = 1\,.
\]  
In this case, or more generally whenever $Y = S^1$ and $\pi_1(X)$ is abelian, vertex algebras provide an adequate algebraic framework for quantization.
\end{nolabel}
\label{sec:basic-constr}
\begin{nolabel}
Let $\Lambda \simeq \mathbb{Z}^3$ be a free $\mathbb{Z}$-module of rank $3$ with basis $\left\{ \omega^*_i \right\}$. We denote by $\varepsilon_{ijk}$ the totally antisymmetric tensor, thought of as an isomorphism $\det: \wedge^2 \Lambda \xrightarrow{\sim} \Lambda^\vee := \Hom(\Lambda, \mathbb{Z})$. 

Let $\left\{ \omega^i \right\}$ be the dual basis of $\Lambda^\vee$. Let $R$ be a super-commutative $\mathbb{R}$-algebra (for simplicity the reader may take $R=\mathbb{R}$), let $V:= \Lambda \otimes_{\mathbb{Z}} R$ and let $V^*:=\Lambda^\vee \otimes_\mathbb{Z} R$ be its dual space. The bases $\left\{\omega^i\right\}$ and $\left\{\omega_i^* \right\}$ define coordinates $\{x^1,x^2,x^3\}$ on $V$ and $\{x_1^*,x_2^*,x_3^*\}$ on $V^*$, respectively. In these coordinates, the $3$-form $\det$ can be written as $\varepsilon_{ijk} dx^i dx^j dx^k$. Consider the Lie group $G$ defined as a central extension: 
\[ 
0 \rightarrow V^* \rightarrow G \rightarrow V \rightarrow 0\,, 
\]
with multiplication given by 
\[ 
(v^*,v) \cdot (w^*, w) = \left( v^* + w^* + \frac{1}{2}\det(v,w,\cdot), v+w \right)\,,
\]
for all $v,w\in V$ and $v^*,w^*\in V^*$.
Quotienting by the co-compact subgroup $\Gamma \subset G$ generated by $\Lambda$ and $\Lambda^\vee$ we obtain a compact $6$-dimensional nilmanifold $X_R = \Gamma \backslash G$. If $R=\mathbb{R}$, we denote this manifold simply by $X$. The projection $G \rightarrow V$ induces a $T^3$-fibration  $\pi:X\rightarrow \Lambda \backslash V \simeq T^3$ which is topologically non-trivial as $\pi^1(X) = \Gamma$ is not abelian. 
\label{no:G-definition}
\end{nolabel}
\begin{nolabel}
Let $\fg$ be the Lie algebra of $G$. Since $\fg$ is a 2-step nilpotent Lie algebra, whose only non-trivial brackets are of the form $[v,w] = \det(v,w,\cdot)$, the exponential map is a diffeomorphism from $\fg$ to $G$. The coordinate expression for $\det$ shows that $\fg$ admits a basis $\left\{ \alpha^i, \beta_i \right\}_{i=1}^3$ with respect to which the only non-vanishing brackets are
\begin{equation} 
\label{eq:fgbrackets} [\beta_i, \beta_j] = \varepsilon_{ijk} \alpha^k. 
\end{equation}
Moreover, $\fg$ is endowed with an ad-{\it invariant} non-degenerate symmetric bilinear pairing $(\cdot\,,\cdot) \in \Sym^2 \fg^*$ and an ad-{\it invariant} non-degenerate alternating pairing $\omega \in \wedge^2 \fg^*$ such that $V$ and $V^*$ are isotropic for both pairings and furthermore
\[ 
(v^*, v) = \omega(v^*, v) = v^*(v), \qquad v\in V, \: v^* \in V^*\,.
\]
The Cartan form $([\cdot,\cdot],\cdot) \in \wedge^3 \fg^*$ is ad-invariant and its restriction to $\wedge^3 V^*$ coincides with $\det$. Since left-invariant vector fields on $G$ provide a trivialization of the tangent bundle of $X$ as $TX \simeq X \times \fg$, these structures can be interpreted geometrically on $X$. Namely, $\omega$ corresponds to a $2$-form $\omega \in \wedge^2 T^*X$ while the Cartan form corresponds to the pullback $\pi^*\!\det$ of the orientation form defined by $\det$ (which we denote by the same symbol) on $\Lambda\backslash V\cong T^3$. These two differential forms on $X$ are related by $d\omega = \pi^*\!\det$. In particular, $\pi^*\!\det$ is exact even though $\det$ is not zero in $H^3(\Lambda \backslash V)$. In the language of \cite{severa}, $(X,\omega)$ is a \emph{twisted Poisson manifold} with background $\pi^*\!\det$.

\label{no:twisted-Poisson-structure}
\end{nolabel}
\begin{nolabel}\label{no:super-nilmanifold}
The super-manifold $T[1]X \simeq X \times \fg[1]$ is a key ingredient in the construction of the super-symmetric theories of this paper. The super-commutative algebra of functions on $T[1]X$ is a free module of rank $6$ over $C^\infty(X)$. The fermionic counterparts $\left\{ \psi^i, \psi^*_i \right\}_{i=1}^3$ of the basis vectors $\left\{ \alpha^i, \beta_i \right\}_{i=1}^3$ are odd generators of the $C^\infty(X)$-module $C^\infty(X)\otimes \fg[1]$. Locally, the super-manifold $T[1]X$ has $6$ even coordinates $x^i,x^*_i$ and $6$ odd coordinates $\psi^i, \psi^*_i$. 

$T[1]X$ can also be realized as a global quotient as follows. The Lie algebra $\fg$ admits a super-extension $\fg_\super := \fg \ltimes \fg[1]$, where the semidirect product is taken with respect to the adjoint action and the super-brackets in $\fg[1]$ vanish. Then $T[1]X \simeq \Gamma \backslash G_\super$, where  $G_\super \simeq G \ltimes \fg[1]$ is the Lie super-group integrating $\fg_\super := \fg \ltimes \fg[1]$. In this sense, $T[1]X$ can be thought of as a ``super-nilmanifold''. 
\end{nolabel}

\begin{nolabel}\label{no:super-g}
Let $\fg$ be the Lie algebra of \ref{no:twisted-Poisson-structure} with its symmetric invariant pairing $(\cdot\,,\cdot)$. Consider the affine Kac-Moody Lie algebra $\hat{\fg} = \fg( (t)) \oplus \mathbb{C} K$. It is a central extension of the complexification of the Lie algebra $\maps(S^1, \fg)$ of loops in $\fg$. We also have the affine Kac-Moody vertex operator algebra $V(\fg)$ is generated by fields $\left\{ \alpha^i,\beta_i \right\}$, $i=1,2,3$ whose non-vanishing OPEs are
\begin{equation} \label{eq:affine1}
\beta_i(z) \cdot \beta_j(w) \sim \frac{\varepsilon_{ijk}\alpha^k(w)}{z-w}, \qquad \beta_i(z) \cdot \alpha^j(w) \sim \frac{\delta_i^j}{(z-w)^2}. 
\end{equation}
Passing from $S^1$ to the super-circle $S^{1|1}$, we obtain a Lie super-algebra $\maps(S^{1|1} , \fg)$ of super-loops into $\fg$, its central extension $\hat{\fg}_\super$ and the vertex super-algebra $V(\fg_\super)$ \cite{kac:vertex} generated by the even fields $\left\{ \alpha^i,\beta_i \right\}_{i=1}^3$ and by the odd fields $\left\{ \bar{\alpha}^i, \bar{\beta}_i \right\}_{i=1}^3$ whose  only non-trivial OPEs are given by \eqref{eq:affine1} as well as
\begin{equation}\label{eq:affine2}
\beta_i(z) \cdot \bar{\beta}_j(w) \sim \frac{\varepsilon_{ijk} \bar{\alpha}^k(w)}{z-w}, \qquad \bar{\beta_i}(z) \cdot \bar{\alpha}^j(w) \sim \frac{\delta_i^j}{z-w}\,.
\end{equation}
In terms of super-fields, $V(\fg_\super)$ is generated by the odd super-fields
\begin{equation}\label{eq:affine3}
\bar{a}^i(z,\theta) := \bar{\alpha}^i(z) + \theta \alpha^i(z), \qquad \bar{b}_i(z,\theta) := \bar{\beta}_i(z) + \theta \beta_i(z)
\end{equation}
and the OPEs \eqref{eq:affine1}-\eqref{eq:affine2} can be rewritten as
\begin{equation}\label{eq:affine4}
\bar{b}_i (z,\theta) \cdot \bar{b}_j(w,\eta) \sim \frac{\varepsilon_{ijk} \bar{a}^k}{z-w-\theta\eta}, \qquad \bar{b}_i (z,\theta) \cdot \bar{a}^j(w,\eta) \sim \frac{\delta_i^j (\theta - \eta)}{(z-w-\theta\eta)^2}\,. 
\end{equation}
\end{nolabel}

\begin{nolabel}
Given any super-commutative $\mathbb{R}$-algebra $R$, consider the affine super-space $\mathbb{A}^{1|1}= \Spec R[\sigma, \zeta]$ where $\sigma$ is even and $\zeta$ is odd. Let $\mathbb{Z}$ act on $\mathbb{A}^{1|1}$ by $(\sigma,\zeta) \mapsto (\sigma + n, (-1)^n \zeta)$ and let $S^{1|1}$ be the quotient $\mathbb{Z} \backslash \mathbb{A}^{1|1}$. A general $R$-point $\varphi$ of the \emph{super-loop space} $\cL^sX := \mathrm{Maps} (S^{1|1}, X)$ can be described by choosing a lifting $\bar \varphi\in \maps(\mathbb R^{1|1},G)$. This defines a map $\bar\varphi^* \in \Hom_R(\cO_G, R[\sigma,\zeta])$, where $\cO_G$ is the algebra of functions in $G$. Notice that a choice of splitting $G \simeq V \times V^*$ defines an isomorphism $\cO_G\cong\Sym^\bullet_R(V\oplus V^*)$. The action of $\Gamma$ on $G$ by left-multiplication induces a representation of $\Gamma$ on $\cO_G$ defined by $(\gamma\cdot f)(g) := f(\gamma^{-1} \cdot g)$ for all $f\in \cO_G$, $\gamma\in \Gamma$ and $g\in G$. Similarly, $\mathbb{Z}$ acts on $R[\sigma,\zeta]$ by $(n\cdot p)(\sigma,\zeta):= p(\sigma - n, (-1)^n \zeta)$. In terms of the pullback homomorphism $\bar\varphi^*$, the equivariance of $\bar\varphi$ is equivalent to the existence, for every $n\in \mathbb Z$ of a $\gamma \in \Gamma$ such that
\begin{equation} \left( n \cdot \bar\varphi^*(f) \right) = \bar\varphi^*(\gamma \cdot f)
\label{eq:equivariance1}
\end{equation}
for all $f\in \cO_G$.
\end{nolabel}

\begin{nolabel}
Since it is enough to check \eqref{eq:equivariance1} on generators of $\cO_G$, we may describe the equivariance condition for $\bar \varphi$ more explicitly as follows. If $\gamma=(\omega_i^*,\omega_i) \in \Gamma$, then 
\[
\gamma \cdot x^i = x^i - \omega^i, \qquad \gamma \cdot  x^*_i = x^*_i - \omega^*_i - \frac{1}{2} \varepsilon_{ijk} \omega^j x^k\,, 
\] 
where we sum over repeated indices. Given a loop $\varphi \in \cL^s X$ we write, with a slight abuse of notation, $x^i(\sigma,\zeta)$ (resp $x^*_i(\sigma,\zeta)$) for the function $\bar\varphi^* (x^i) \in R[\sigma,\zeta]$ (resp. $\bar\varphi^*(x^*_i) \in R[\sigma,\zeta]$). In this way, every super-loop in $\cL^s X$ can be described in terms of its components $x^i(\sigma,\zeta)$, $x^*_i(\sigma,\zeta)$ on which we impose the constraints 
\begin{equation}
x^i(\sigma+1,-\zeta) = x^i(\sigma,\zeta)+ \omega^i, \qquad x^*_i(\sigma+1,-\zeta) = x^*_i(\sigma,\zeta) + \omega^*_i + \frac{1}{2} \varepsilon_{ijk} w^j x^k(\sigma, \zeta)\,, 
\label{eq:equivariance2}
\end{equation}
for some $(\omega^*_i, \omega^i) \in \Gamma$ and for all $i=1,2,3$. 
\label{no:superloops}
\end{nolabel}

\begin{nolabel}
Formally, the general solution of \eqref{eq:equivariance2} is of the form
\begin{equation}
\begin{aligned}
x^i(\sigma,\zeta) &= \omega^i \sigma + \sum_{n \in \mathbb{Z}} x^i_n e^{-2 \pi i n \sigma}  + \zeta \sum_{n \in \tfrac{1}{2} + \mathbb{Z}} \psi^i_n e^{-2 \pi i n \sigma} ,\\
x^*_i(\sigma,\zeta) &=  \omega^*_i \sigma + \sum_{n \in \mathbb{Z}} x^*_{i,n} e^{-2 \pi i n \sigma}  + \zeta \sum_{n \in \tfrac{1}{2} + \mathbb{Z}} \psi^*_{i,n} e^{-2 \pi i n \sigma} + \frac{\sigma}{2} \varepsilon_{ijk} \omega^j x^k(\sigma,\zeta),  \\
\end{aligned}
\label{eq:loop-solutions}
\end{equation}
where $(\omega^*_i, \omega^i) \in \Gamma$, $x^i_n \in V^*$ and $x^*_{i,n} \in V$ are \emph{even}\footnote{Recall that $x^i$ (resp. $x^{*}_i$) is viewed as a coordinate of $V$ (resp. $V^*$).} while $\psi^i_n \in V^*$ and $\psi^*_{i,n} \in V$ are \emph{odd}\footnote{Here we see that we need to work over coordinate rings $R$ with odd parameters.}. Notice that the identity $\varepsilon_{ijk}\omega^i \omega^j x^k = 0$ is needed in order to check that the expansion \eqref{eq:loop-solutions} does indeed satisfy the equivariance constraint \eqref{eq:equivariance2}.
\end{nolabel}

\begin{nolabel}
The use of complex exponentials in (\ref{eq:loop-solutions}), requires us to complexify our variables and work with $(\mathbb C^*)^{1|1}$ instead of $S^{1|1}$. Moreover, we are implicitly stepping outside the algebraic category into the differentiable category. At the cost of introducing logarithms, one can return to the realm of super formal power series by the super-conformal change of coordinates $z = e^{2 \pi \sqrt{-1} \sigma}$, $\theta = ( 2\pi \sqrt{-1})^{1/2}\zeta e^{\pi \sqrt{-1} \sigma}$. In terms of these new coordinates, the fields \eqref{eq:loop-solutions} can be expanded as
\begin{equation}
\begin{aligned}
x^i(z,\theta) &=  \frac{\log(z)}{2 \pi \sqrt{-1}} \omega^i + \sum_{n \in \mathbb{Z}} x^i_n z^{-n}  + \frac{\theta}{(2 \pi \sqrt{-1})^{1/2}} \sum_{n \in \tfrac{1}{2} + \mathbb{Z}} \psi^i_n z^{-n - 1/2} \\
x^*_i(z,\theta) &=  \frac{\log(z)}{2 \pi \sqrt{-1}} \omega^*_i + \sum_{n \in \mathbb{Z}} x^*_{i,n} z^{-n} + \frac{\theta}{(2 \pi \sqrt{-1})^{1/2}} \sum_{n \in \tfrac{1}{2} + \mathbb{Z}} \psi^*_{i,n} z^{-n -1/2} \\ & \quad + \frac{\log(z)}{4\pi \sqrt{-1}} \varepsilon_{ijk} \omega^j x^k(z,\theta)\,.
\end{aligned}
\label{eq:exponentiated-fields}
\end{equation}
Setting $\theta=0$, we recover the expansion of loops in $X$ found in (\cite{heluanaldi}, Section 2.2).
\end{nolabel}

\begin{nolabel}
The presence of logarithms in \eqref{eq:exponentiated-fields} means that the fields $x^i(z,\theta)$, $x_i^*(z,\theta)$ are multi-valued. One way to obtain single-valued fields is to differentiate and look at  currents instead. The trivialization $T^*X\simeq X\times \fg^*$, provided by the the globally defined left-invariant differential forms
\begin{equation} \label{eq:forms-invariant}
dx^i, \qquad dx^*_i - \frac{1}{2} \varepsilon_{ijk} x^j dx^k 
\end{equation}
on $X$ suggests that the right currents to consider are
\begin{equation} \label{eq:currents-well-defined-0}
\susy x^i(z,\theta), \qquad \susy x^*_i (z,\theta) - \frac{1}{2} \varepsilon_{ijk} x^j(z,\theta) \susy x^k(z,\theta)\,,
\end{equation}
where $\susy$ is the odd derivation that acts on our fields as $\partial_\theta + \theta \partial_z$, and extends the vector field $\partial_\zeta+\zeta\partial_\sigma$ of \ref{no:derivationsuper}. A straightforward calculation shows that the currents (\ref{eq:currents-well-defined-0}) can be expressed in terms of the components \eqref{eq:exponentiated-fields} as 

\begin{equation}\label{eq:currents-well-defined}
\begin{gathered}
\susy x^i(z,\theta) = \frac{1}{(2 \pi \sqrt{-1})^{1/2}} \sum_{n \in 1/2 + \mathbb{Z}} \psi^i_n z^{-n -1/2} + \frac{1}{2 \pi \sqrt{-1}} \theta z^{-1} \omega^i - \theta \sum_{n \in \mathbb{Z}} n x^i_n z^{-1-n}, \\
\begin{split}
\susy x^*_i(z,\theta) &- \frac{1}{2} \varepsilon_{ijk} x^j(z,\theta) \susy x^k(z,\theta) = \\ & \frac{1}{(2\pi \sqrt{-1})^{1/2}}\left( \sum_{n \in 1/2+\mathbb{Z}} \psi^*_{i,n} z^{-n-1/2} - \frac{1}{2} \varepsilon_{ijk} \sum_{\stackrel{n \in \mathbb{Z}}{m \in 1/2 + \mathbb{Z}}} x^j_{n} \psi^k_{m} z^{ -1-m-n} \right) +  \\ 
& \theta \left\{  \frac{\omega^*_i}{2 \pi \sqrt{-1}} z^{-1} - \sum_{n \in \mathbb{Z}} x^*_{i,n} z^{-1-n} + \frac{1}{2 \pi \sqrt{-1}} \varepsilon_{ijk} \omega^j \sum_{n \in \mathbb{Z}}x^k_n z^{-1-n} \right. \\& + \left. \frac{1}{2} \varepsilon_{ijk} \sum_{m \in \mathbb{Z}} mx^j_n x^k_m z^{-1-m-n} - \frac{1}{4 \pi \sqrt{-1}} \varepsilon_{ijk} \sum_{m,n \in 1/2 + \mathbb{Z}} \psi^j_m \psi^k_n z^{-n-m-1} \right\}.
\end{split}
\end{gathered}
\end{equation}
Since these expansions contain no logarithms, the super-fields (\ref{eq:currents-well-defined-0}) are well defined.
\end{nolabel}

\begin{nolabel}\label{rescaling}
We would like to use the explicit expansion of the super-loops on $X$ given in \eqref{eq:exponentiated-fields} to \emph{quantize} the infinite-dimensional Poisson super-manifold $\cL^sX$. We impose the canonical quantization relations in two steps. The super-fields \eqref{eq:currents-well-defined} define super-loops in $\cL^s T^*X \simeq \cL^s TX \simeq \cL^s (X \times \fg)$ which are invariant under the left action of $G$ i.e.\ super-loops in $\maps(S^{1|1}, \fg)$. It is therefore natural to identify the super-fields \eqref{eq:currents-well-defined} with the super-fields $\left\{ \bar{a}^i, \bar{b}_i \right\}_{i=1}^3$ that generate $V(\fg_\super)$ as in \eqref{eq:affine3}. After promoting $\left\{ \frac{\omega^i}{2\pi \sqrt{-1}}, \frac{\omega^*_i}{2\pi \sqrt{-1}} \right\}$ to operators $\left\{ W^i,W^*_i \right\}$ and rescaling $\psi^j_n \mapsto (2 \pi \sqrt{-1})^{1/2} \psi^j_n$, $\psi^*_{j,n} \mapsto (2 \pi \sqrt{-1})^{1/2}\psi^*_{j,n}$ for all $n\in \mathbb Z$, we may write
\begin{align}
\susy x^i(z,\theta) &= \bar a^i(z,\theta)=\bar \alpha^i(z) + \theta \alpha^i(z)\,, \\ 
\susy x^*_i(z,\theta) &- \frac{1}{2} \varepsilon_{ijk} x^j(z,\theta) \susy x^k(z,\theta) = \bar b_i(z,\theta)= \bar\beta_i(z) + \theta \beta_i(z)\,,
\end{align}
where
\begin{equation}\label{eq:super-currents-components}
\begin{aligned}
\bar{\alpha}^i(z) &= \sum_{n \in 1/2 + \mathbb{Z}} \psi^i_n z^{-n -1/2}, \qquad \alpha^i(z)=  z^{-1} W^i - \sum_{n \in \mathbb{Z}} n x^i_n z^{-1-n}, \\
\bar{\beta}_i(z) &=  \sum_{n \in 1/2+\mathbb{Z}} \psi^*_{i,n} z^{-n-1/2} - \frac{1}{2} \varepsilon_{ijk} \sum_{\stackrel{n \in \mathbb{Z}}{m \in 1/2 + \mathbb{Z}}} x^j_{n} \psi^k_{m} z^{ -1-m-n},\\
\beta_i(z) &= 
  W^*_i z^{-1} - \sum_{n \in \mathbb{Z}}n x^*_{i,n} z^{-1-n} + \varepsilon_{ijk} W^j \sum_{n \in \mathbb{Z}}x^k_n z^{-1-n} + \\ &\quad   \frac{1}{2} \varepsilon_{ijk} \sum_{m,n \in \mathbb{Z}} mx^j_n x^k_m z^{-1-m-n} - \frac{1}{2} \varepsilon_{ijk} \sum_{m,n \in 1/2 + \mathbb{Z}} \psi^j_m \psi^k_n z^{-n-m-1}. 
\end{aligned}
\end{equation}
Imposing the OPEs \eqref{eq:affine4}, we obtain that in particular $[W_i^*,W_j^*]=-\varepsilon_{ijk} W^k$. This suggests the identification
\begin{equation}\label{eq:Waction}
W^*_i = \partial_{x^i} + \frac{1}{2} \varepsilon_{ijk} x^j \partial_{x^*_k}, \qquad W^i = \partial_{x^*_i}\,,
\end{equation}
and thus
\begin{equation}\label{eq:zero-modes-commutations}
\begin{aligned}
{[}W^*_i, x^j_0] &= [W^j, x^*_{i,0}]  = \delta_i^j \hbar, \\
[W^*_i, x^*_{j,0}] &= - \frac{1}{2} \varepsilon_{ijk} x^k_{0}. 
\end{aligned}
\end{equation}
Combining \eqref{eq:zero-modes-commutations} with the commutators deduced from \eqref{eq:affine4} we arrive at the full set of canonical commutators
\begin{equation}\label{eq:lie-super-algebra}
\begin{aligned}
{[}\psi^*_{i,m}, \psi^j_n] &= n [x^*_{i,m}, x^j_n] = \delta_i^j \delta_{m,-n} \hbar, \\ 
[W^*_i, x^j_n] &= [W_j, x^i_n] =  \delta_i^j \delta_{n,0}\hbar , \\ 
[W^*_i, \psi^*_{j,n}] &= -\frac{1}{2} \varepsilon_{ijk} \psi^k_n, \\
[x^*_{i,m}, \psi^*_{j,n}] &= \frac{1}{2m} \varepsilon_{ijk} \psi^k_{m+n}, \qquad &m \neq 0, \\
[W^*_i, W^*_j] &= - \varepsilon_{ijk} W^k, \\ 
[W^*_i, x^*_{j,n}] &= -\frac{1}{2} \varepsilon_{ijk} x^k_n, \\ 
[x^*_{i,m}, x^*_{j,n}] &= \frac{1}{2} \varepsilon_{ijk} \frac{m+n}{mn} x^k_{m+n} + \frac{\delta_{m,-n}}{m^2} \varepsilon_{ijk} W^k, \qquad &m,n \neq 0 \\ 
[x^*_{i,n}, x^*_{j,0}] &= \frac{1}{2n} \varepsilon_{ijk} x^k_n, \qquad &n \neq 0.
\end{aligned}
\end{equation}
which encodes the Poisson structure on $\cL^sX$. 
\end{nolabel}
\begin{rem}
The denominator $m^2$ in the bracket  $[x^{*}_{i,m}, x^*_{j,n}]$ is responsible for the appearance of dilogarithmic singularities in the OPEs of the corresponding vertex operators in \cite{heluanaldi}.
\label{rem:dilog1}
\end{rem}

\begin{nolabel}
Let $\cL_0$ be the $13$-dimensional Lie algebra spanned by $W^i$, $W_i^*$, their conjugate coordinates $x_0^i$, $x_{i,0}^*$ and by $\hbar$. Let $\cL^b$ the $\mathbb Z$-graded vector space whose degree-$0$ component is $\cL_0$ and whose component of degree $n$ is generated by $x^i_n, x^*_{i,n}$ for all $n \in \mathbb{Z}\setminus\{0\}$. As proved in (\cite{heluanaldi}, Section 2.2), $\cL^b$ is an infinite-dimensional Lie algebra with respect to the commutators \eqref{eq:lie-super-algebra}. This can be extended as follows. Let $\cL^f$ be the $(\frac{1}{2}+\mathbb Z)$-graded vector space generated by $\psi^i_n, \psi^*_{i,n}$, in degree $n \in \frac{1}{2} + \mathbb{Z}$ and let $\cL=\cL^b\oplus \cL^f[1]$. It is straightforward to check the following

\begin{lem*}
$\cL$ is a $\tfrac{1}{2}\mathbb{Z}$ graded Lie super-algebra whose generators obey the commutators \eqref{eq:lie-super-algebra}. 
\end{lem*}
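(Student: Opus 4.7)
My plan is to verify the Lie super-algebra axioms on $\cL = \cL^b \oplus \cL^f[1]$ directly, leveraging the Lie algebra structure on $\cL^b$ established in \cite{heluanaldi} and the fact that $\cL^b$ acts on the fermionic submodule $\cL^f[1]$ in a particularly rigid way. Compatibility with the $\tfrac{1}{2}\mathbb{Z}$-grading is immediate by inspection of \eqref{eq:lie-super-algebra}, and the only nonvanishing odd-odd bracket, $[\psi^*_{i,m}, \psi^j_n] = \delta_i^j \delta_{m,-n} \hbar$, is manifestly symmetric in $(i,m) \leftrightarrow (j,n)$, as required of a Lie super-bracket.

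I would then split the super-Jacobi identity according to the number of odd arguments in the triple. The three-even case is precisely the Jacobi identity for $\cL^b$ supplied by \cite{heluanaldi}, while the three-odd case is trivial since any bracket of two odd elements is either zero or a central multiple of $\hbar$. The two mixed cases rely on a single structural observation: the action of $\cL^b$ on $\cL^f[1]$ is nilpotent, in the sense that every element of $\cL^b$ annihilates the unstarred modes $\psi^i_n$, and the non-trivial brackets $[W^*_i, \psi^*_{j,n}]$, $[x^*_{i,m}, \psi^*_{j,n}]$ in \eqref{eq:lie-super-algebra} produce only unstarred modes. Hence $[A, [B, \psi]] = 0$ for all $A, B \in \cL^b$ and $\psi \in \cL^f[1]$, so the two-even/one-odd Jacobi identity collapses to checking that $[[A, B], \psi] = 0$; this follows by inspecting \eqref{eq:lie-super-algebra} and observing that every $[A, B] \in [\cL^b, \cL^b]$ lies in the subspace spanned by $\{x^i_n, W^i, \hbar\}$, all of which commute with $\cL^f[1]$.

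The one-even/two-odd case reduces similarly to the $\cL^b$-invariance of the pairing $\cL^f[1] \times \cL^f[1] \to \mathbb{R}\hbar$. I would verify that $[[A, \psi^*_{j,m}], \psi^*_{k,n}] + [\psi^*_{j,m}, [A, \psi^*_{k,n}]] = 0$ for $A \in \cL^b$ by direct expansion: for instance, taking $A = W^*_i$ produces the two terms $-\tfrac{1}{2}\varepsilon_{ijk}\delta_{m,-n}\hbar$ and $+\tfrac{1}{2}\varepsilon_{ijk}\delta_{m,-n}\hbar$, which cancel by antisymmetry of $\varepsilon_{ijk}$; the case $A = x^*_{i,p}$ is analogous. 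The main obstacle is purely bookkeeping, since there are many elementary brackets to enumerate, but no individual verification is conceptually difficult given these reductions.
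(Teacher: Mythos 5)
Your proof is correct and is exactly the ``straightforward check'' the paper leaves to the reader: a direct case-by-case verification of the super-Jacobi identity, with the three-even case delegated to \cite{heluanaldi} and the mixed cases collapsing because all even--even brackets land in the span of $\{x^i_n, W^i, \hbar\}$, which acts trivially on $\cL^f[1]$. The one computation you display (cancellation of $\mp\tfrac{1}{2}\varepsilon_{ijk}\delta_{m,-n}\hbar$ for $A=W^*_i$) and its analogue for $A=x^*_{i,p}$ are the only non-vacuous checks, and they come out as you state.
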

\end{nolabel}

\begin{nolabel}\label{no:pairing}
The Lie algebra $\cL$ admits a canonical invariant super-symmetric bilinear form defined by $(\cL_n, \cL_m) = 0$ if $n+m \neq 0$ and 
\begin{equation}\label{eq:pairing}  (W^i, W^*_j) = (\psi^i_n, \psi^*_{j,-n}) = (x^i_m, x^*_{j,-m}) = \delta^i_j, \qquad n \in \frac{1}{2} + \mathbb{Z}, \quad m \in \mathbb{Z}. 
\end{equation}
\end{nolabel}

\begin{nolabel}\label{no:graded-0-part}
Consider the triangular decomposition
\[
\cL = \cL_{-}  \oplus \cL_0 \oplus \cL_+\,,
\]
where 
$\cL_-$ and $\cL_+$ respectively denote the components of negative and positive degree of $\cL$.  The group $G$ acts on $X$ and thus on $L^2(X)$. The differential of this action defines a representation of $\fg$ on $C^\infty(X)$ which can be extended to $\cL_0$ by letting: $\hbar$ act as the identity, $x^*_{i,0}$ and $x^i_0$ act by multiplication by the local coordinates $x^*_i, x^i$ of $X$, $W^*_i$ and $W_i$ act according to \eqref{eq:Waction}
We further extend this representation to all of $\cL_{\geq 0}$ by requiring that $\cL_+ \cdot C^\infty (X) = 0$. The induced representation
\begin{equation} \label{eq:H-definition}  \cH := \mathrm{Ind}_{\cL_{\geq 0}}^{\cL} C^\infty (X) 
\end{equation}
is compatible with the pairing \eqref{eq:pairing} and can be completed to a Hilbert super-space which we also denote by $\cH$. Setting $\cL^b_+=\cL_+\cap \cL^b$, we recover the Hilbert space of \cite{heluanaldi} as the completion of
\[
\cH^b:= \mathrm{Ind}_{\cL_{\geq 0}^b}^{\cL^b} C^\infty (X)\,.
\]
\end{nolabel}

\begin{nolabel}\label{no:in-terms-of-super-manifold}
Alternatively, $\cH$ can be defined in terms of the action of $G_\super$ on the super-manifold $T[1]X$ described in \ref{no:super-nilmanifold}. In fact, the differential of this action defines an action of the subalgebra $\cL_{-1/2} \oplus \cL_0 \oplus \cL_{1/2}$ on $C^\infty (T[1]X)$ with respect to which:
\begin{enumerate}
\item $\hbar$ acts as the identity;
\item $x^*_{i,0}$, $x^i_0$, $\psi^*_{i,-1/2}$, $\psi^i_{-1/2}$ act by multiplication by the respective local coordinates $x^*_i$, $x^i$, $\psi^*_i$ $\psi_i$;
\item the operators $W^*_i$, $W^i$, $\psi^*_{i,1/2}$, $\psi^i_{1/2}$ act by the derivations 

\begin{equation}
\begin{aligned} 
W^*_i &= \partial_{x_i} + \frac{1}{2} \varepsilon_{ijk} x^j \partial_{x^*_k} + \frac{1}{2} \varepsilon_{ijk} \psi^j \partial_{\psi^*_k}, &&&  W^i &= \partial_{x^*_i}, \\
\psi^*_{i,1/2} &= \partial_{\psi^i},  &&& \psi^i_{1/2} &= \partial_{\psi^*_i}. 
\end{aligned}
\end{equation}
\end{enumerate}
This action is extended to all of the subalgebra $\cL_{\geq -1/2}\subseteq \cL$ by requiring  that $\cL_{> 1/2}$ acts by zero. 
The Hilbert super-space $\cH$ can then be defined as the completion of the induced representation
\begin{equation}\label{eq:super-H}
\cH := \mathrm{Ind}_{\cL_{\geq -1/2}}^\cL C^\infty \left( T[1]X \right). 
\end{equation}
\end{nolabel}
\begin{nolabel}\label{no:in-terms-of-affine}
$\cH$ has a natural structure of module for the super-vertex algebra $V(\fg_\super)$ of \ref{no:super-g}. Indeed the action of $\fg$ on $C^\infty(X)$ (or equivalently the action of $\fg_\super$ on $C^\infty(T[1]X)$) induces a $V(\fg_\super)$ module $\cH'$ defined as follows. We expand the fields  in \ref{no:super-g} as 

\begin{equation}\label{eq:expansion-super-g}
\begin{aligned}
\beta_{i}(z) &:= \sum_{n \in \mathbb{Z}} \beta_{i,n} z^{-1-n}, &&& \alpha^{i}(z) &:= \sum_{n \in \mathbb{Z}} \alpha^i_n z^{-1-n}, \\
\bar{\beta}_i(z) &:= \sum_{n \in \frac{1}{2} + \mathbb{Z}} \bar{\beta}_{i,n} z^{-1/2-n}, &&& \bar{\alpha}^i(z) &:= \sum_{n \in \frac{1}{2} + \mathbb{Z}} \bar{\alpha}^i_n z^{-1/2-n}\,.
\end{aligned}
\end{equation}
The only non-trivial commutators implied by the OPEs \eqref{eq:affine1}-\eqref{eq:affine2} are 
\begin{equation}\label{eq:affine-final}
\begin{aligned}
{[}\beta_{i,m}, \beta_{j,n} ] &=  \varepsilon_{ijk} \alpha^k_{m+n}, &&& [\beta_{i,m}, \bar{\beta}_{j,n} ] &= \varepsilon_{ijk} \bar{\alpha}^k_{m+n} \\ 
[\beta_{i,m}, \alpha^{j}_n] &= m \delta_i^j \delta_{m,-n} \mathbbm{1}, &&& [\bar{\beta}_{i,m}, \bar{\alpha}^j_n] &= \delta_i^j \delta_{m,-n} \mathbbm{1}\,, 
\end{aligned}
\end{equation}
where we denoted the identity of $V(\fg_\super)$ by  $\mathbbm{1}$. These are the commutators of a $\tfrac{1}{2} + \mathbb{Z}$-graded Lie super-algebra $\tilde{\fg}_\super$ generated by
\begin{enumerate}
\item a central element $\mathbbm{1}$ in degree $0$;
\item even vectors $\beta_{i,n}$, $\alpha^i,n$ in degree $n \in \mathbb{Z}$;
\item odd vectors $\bar{\beta}_{i,n}$, $\bar{\alpha}^i_n$ of degree $n \in \tfrac{1}{2} + \mathbb{Z}$\,.
\end{enumerate}
In particular, the degree-$0$ part of $\tilde{\fg}_\super$ is $\fg_\super \oplus \mathbbm{1}$. In terms of the expansion \eqref{eq:expansion-super-g}, the identification \eqref{eq:super-currents-components} between the super-fields \eqref{eq:currents-well-defined-0} and the generating super-fields \eqref{eq:affine3} of $V(\fg_\super)$ becomes 
\begin{equation}\label{eq:identifying-modes}
\begin{gathered}
\bar{\alpha}^i_n = \psi^i_n, \qquad  \alpha^i_0 = W^i, \qquad \alpha^i_n = -n x^i_n \quad (n \neq 0)\\
\bar{\beta}_{i,n} = \psi^*_{i,n} - \frac{1}{2} \varepsilon_{ijk} \sum_{m \in \mathbb{Z}} x^j_{m} \psi^k_{n-m}, \\
\beta_{i,n} = \delta_{i,0} W^*_{i} + \varepsilon_{ijk} W^j x^k_n + \frac{1}{2} \varepsilon_{ijk} \sum_{m \in \mathbb{Z}} m x^j_{n-m} x^k_{m} - \frac{1}{2} \varepsilon_{ijk} \sum_{m \in \frac{1}{2} + \mathbb{Z}} \psi^j_m \psi^k_{n-m}\,.
\end{gathered}
\end{equation}
\end{nolabel}

\begin{nolabel}
The Lie super-algebra $\tilde{\fg}_\super$ admits a non-degenerate symmetric invariant form such that $(\tilde{\fg}_{\super, n}, \tilde{\fg}_{\super,m}) = 0$ if $m+n \neq 0$ and the only non-trivial pairings on the basis elements are:
\begin{equation}\label{eq:invariant-fform-affine}
\left( \bar{\beta}_{i,n}, \bar{\alpha}^j_{-n} \right) = \left(\beta_{i,m}, \alpha^j_{-m}\right) = \delta_i^j, \qquad n \in \frac{1}{2} + \mathbb{Z}, \quad m \in \mathbb{Z}.
\end{equation}
If the representation of $\fg_\super$ on $C^\infty(X)$ is extended to $\tilde{\fg}_{\super, \geq 0}$ by requiring that $\mathbbm{1}$ acts as the identity and that $\tilde{\fg}_{\super, > 0}$ acts as zero, the identification (\ref{eq:identifying-modes}) defines a canonical isomorphism of  $\tilde{\fg}_\super$-modules 
\[
\mathrm{Ind}_{\cL_{\geq 0}}^{\cL} C^\infty (X)\cong \mathrm{Ind}^{\tilde{\fg}_\super}_{\tilde{\fg}_{\super, \geq 0}} C^\infty(X)\,.
\]
Moreover this isomorphism intertwines the symmetric bilinear forms (\ref{eq:pairing}) and  (\ref{eq:invariant-fform-affine}).

\end{nolabel}
\begin{nolabel}\label{no:isomorphism-with-tensor} 
The canonical commutators (\ref{eq:lie-super-algebra}) exhibit a non-trivial pairing between the even and odd components of the super-fields \eqref{eq:exponentiated-fields}. Nevertheless we have the following

\begin{thm*}
The super-Hilbert space $\cH$ factorizes canonically as $\cH^b\otimes \cH^f$ where $\cH^f$ is the Hilbert space of the free fermionic system generated by the fields $\psi^*_i$ and $\psi^i$, $i=1,2,3$ satisfying 
\[ 
\psi^*_i(z) \cdot \psi^j(w) \sim \frac{\delta_i^j}{z-w}\,. 
\]
Moreover, under the state-field correspondence, fields corresponding to vectors in $\cH^b\otimes \vac^f$ have trivial OPEs with fields corresponding to vectors in $\vac^b\otimes\cH^f$.   
\end{thm*}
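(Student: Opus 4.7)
The plan is to modify the bosonic generators of $\cL^b$ by quadratic fermion corrections so that the new operators super-commute strictly with every fermionic mode, and then invoke PBW. In \eqref{eq:lie-super-algebra} the only nontrivial mixed brackets between $\cL^b$ and $\cL^f$ are $[W^*_i,\psi^*_{j,n}] = -\tfrac{1}{2}\varepsilon_{ijk}\psi^k_n$ and $[x^*_{i,m},\psi^*_{j,n}] = \tfrac{1}{2m}\varepsilon_{ijk}\psi^k_{m+n}$, so I set
\[
\tilde W^*_i := W^*_i - \tfrac{1}{4}\varepsilon_{iab}\!\!\!\sum_{s \in 1/2+\mathbb{Z}}\!\!\! {:}\psi^a_s\psi^b_{-s}{:},\qquad \tilde x^*_{i,m} := x^*_{i,m} + \tfrac{1}{4m}\varepsilon_{iab}\!\!\!\sum_{s \in 1/2+\mathbb{Z}}\!\!\! {:}\psi^a_s\psi^b_{m-s}{:}\quad (m \neq 0),
\]
and leave $W^i$, $x^i_n$, $x^*_{i,0}$ unchanged. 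Each sum is well defined on $\cH$ because only finitely many terms act nontrivially on any given state.

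The first step is to verify directly that the tildes super-commute with every $\psi^i_n$ and $\psi^*_{i,n}$. Brackets with $\psi^i_n$ are automatic, since $\{\psi,\psi\}=0$. For brackets with $\psi^*_{j,n}$, super-Leibniz together with the unique nontrivial anticommutator $\{\psi^a_s,\psi^*_{j,n}\}=\delta^a_j\delta_{s,-n}$ yields $\sum_s [{:}\psi^a_s\psi^b_{-s}{:},\psi^*_{j,n}] = \delta^b_j\psi^a_n-\delta^a_j\psi^b_n$; after contraction with $-\tfrac{1}{4}\varepsilon_{iab}$ this produces $+\tfrac{1}{2}\varepsilon_{ijk}\psi^k_n$, cancelling $[W^*_i,\psi^*_{j,n}]$ exactly. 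An identical calculation handles $\tilde x^*_{i,m}$.

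Next I would verify that the tildes still satisfy the defining relations of $\cL^b$. The fermion bilinears super-commute with every $W^i, W^*_i, x^i_n, x^*_{i,m}$ (all brackets $[\psi,\textrm{bos}]$ are absent from \eqref{eq:lie-super-algebra}) and super-commute pairwise with each other, because $\{\psi,\psi\}=0$ forces $[{:}\psi\psi{:},{:}\psi\psi{:}]=0$. Every bracket among tildes therefore reduces to the corresponding bracket among untildes, and in particular the singular $\delta_{m,-n}/m^2$ central term of $[\tilde x^*_{i,m},\tilde x^*_{j,n}]$ is preserved verbatim. The tildes thus span a copy of $\cL^b$ acting on $\cH$ that strictly super-commutes with $\cL^f$.

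The factorization $\cH\cong\cH^b\otimes\cH^f$ now follows from PBW applied to \eqref{eq:super-H}. On the ground state $C^\infty(T[1]X)\cong C^\infty(X)\otimes\wedge^\bullet\langle\psi^i,\psi^*_i\rangle$, the fermionic dressing in $\tilde W^*_i$ precisely cancels the $\tfrac{1}{2}\varepsilon_{ijk}\psi^j\partial_{\psi^*_k}$ summand of the $\fg_\super$-action from \ref{no:in-terms-of-super-manifold}, so the tildes act through the first tensor factor and the $\psi^i_n, \psi^*_{i,n}$ modes through the second. Inducing up, $\cH^b$ is canonically identified (via the tildes) with the bosonic Hilbert space of \cite{heluanaldi}, and $\cH^f$ is the free-fermion Fock space whose generators obey $\psi^*_i(z)\psi^j(w)\sim\delta^j_i/(z-w)$. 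The trivial OPE between fields attached to $\cH^b\otimes\vac^f$ and $\vac^b\otimes\cH^f$ is then immediate from the strict super-commutation of the two mode subalgebras. The most delicate step I anticipate is confirming that no normal-ordering anomaly perturbs $[\tilde x^*_{i,m},\tilde x^*_{j,n}]$, so that its $\delta_{m,-n}/m^2$ central term is recovered exactly.
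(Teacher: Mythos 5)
Your proposal is correct and follows essentially the same route as the paper: the identical quadratic‑fermion dressing $\tilde W^*_i = W^*_i - \tfrac14\varepsilon_{ijk}\sum_n\psi^j_n\psi^k_{-n}$, $\tilde x^*_{i,m} = x^*_{i,m} + \tfrac1{4m}\varepsilon_{ijk}\sum_n\psi^j_n\psi^k_{m-n}$ is the paper's change of variables, after which the mixed brackets vanish and the bosonic relations (including the $\delta_{m,-n}/m^2$ term) are untouched. The normal‑ordering worry you flag at the end is vacuous here, since the bilinears involve only the mutually super‑commuting unstarred fermions, so reordering costs nothing.
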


\begin{proof}
Under the change of variables
\[
\tilde{x}^*_{i,m} := x^*_{i,m} + \frac{1}{4m} \varepsilon_{ijk} \sum_{n \in \frac{1}{2} + \mathbb{Z}} \psi^j_n \psi^k_{m-n}, \qquad \tilde{W}^*_i = W^*_i - \frac{1}{4} \varepsilon_{ijk} \sum_{n \in \frac{1}{2} + \mathbb{Z}} \psi^j_n \psi^k_{-n}
\]
the non-trivial commutation relations become
\begin{equation}\label{eq:lie-super-algebra-tensor}
\begin{aligned}
{[}\psi^*_{i,m}, \psi^j_n] &= n [x^*_{i,m}, x^j_n] = \delta_i^j \delta_{m,-n} \hbar, \\ 
[W^*_i, x^j_n] &= [W_j, x^i_n] =  \delta_i^j \delta_{n,0}\hbar , \\ 
[\tilde{W}^*_i, \tilde{W}^*_j] &= - \varepsilon_{ijk} W^k, \\ 
[\tilde{W}^*_i, x^*_{j,n}] &= -\frac{1}{2} \varepsilon_{ijk} x^k_n, \\ 
[\tilde{x}^*_{i,m}, \tilde{x}^*_{j,n}] &= \frac{1}{2} \varepsilon_{ijk} \frac{m+n}{mn} x^k_{m+n} + \frac{\delta_{m,-n}}{m^2} \varepsilon_{ijk} W^k, \qquad &m,n \neq 0 \\ 
[\tilde{x}^*_{i,n}, \tilde{x}^*_{j,0}] &= \frac{1}{2n} \varepsilon_{ijk} x^k_n, \qquad &n \neq 0\,.
\end{aligned}
\end{equation}
\end{proof}
\end{nolabel}

\section{$N=2$ super-conformal structures}\label{sec:super-conformal}

\begin{nolabel}\label{no:n=2susy}
Recall that the $N=2$ super-vertex algebra of central charge $c$ is generated by a Virasoro field $L$ of central charge $c$, a free boson $J$ primary of conformal weight $1$ and two odd fermions $G^\pm$ primary of conformal weight $3/2$ with remaining non-trivial OPEs
\begin{equation}\label{eq:n=2opes}
\begin{aligned}
J(z) \cdot J(w) &\sim \frac{c}{3} \frac{1}{(z-w)^2}, \qquad J(z) \cdot G^\pm(w) \sim \pm \frac{G^\pm(w)}{z-w}, \\
G^+(z)G^-(w) &\sim \frac{L(w) + \frac{1}{2} \partial_w J(w)}{z-w} + \frac{J(w)}{(z-w)^2} + \frac{c}{12} \frac{1}{(z-w)^3}.
\end{aligned}
\end{equation}
In terms of the super-fields\footnote{We are rescaling here the field $J$ of \cite{heluani3,heluani9} to avoid unnecessary $\sqrt{-1}$ factors.}  
\[ 
\begin{aligned} J(z,\theta) &= J(z) + \theta \left(G^-(z) - G^+(z) \right) \\ H(z,\theta) &= \left( G^+(z) + G^-(z) \right) + 2 \theta L(z)\,,\end{aligned} 
\] 
the OPEs \eqref{eq:n=2opes} read
\begin{equation}\label{eq:n=2susyopes}
\begin{aligned}
H(z,\theta) \cdot H(w,\zeta) &\sim \frac{2 \partial H(w,\zeta)}{z-w-\theta\zeta} + \susy H(w,\zeta)\frac{(\theta - \zeta)}{(z-w-\theta\zeta)^2} +  \frac{3 H(w,\zeta)}{(z-w-\theta\zeta)^2}, \\ 
H(z,\theta) \cdot J(w,\zeta) &\sim \frac{2 \partial J(w,\zeta)}{(z-w-\theta\zeta)} + \frac{J(w,\zeta)}{(z-w-\theta\zeta)^2}, \\ 
J(z,\theta) \cdot J(w,\zeta) &\sim \frac{H(w,\zeta)}{(z-w-\theta\zeta)}  + \frac{c}{3} \frac{\theta - \zeta}{(z-w-\theta\zeta)^3}\,, 
\end{aligned}
\end{equation}
where $\susy = \partial_\zeta + \zeta \partial_w$ is the square root of $\partial=\partial_w$. Notice that the super-field $J$ generates the whole $N=2$ super-conformal algebra in the sense that $H$ is recovered from the OPE of $J$ with itself and $G^\pm$ can be calculated from the even and odd components of $J$ and $H$.
\end{nolabel}

\begin{nolabel}
Let $\fg$ be a simple or abelian finite dimensional Lie algebra with its invariant symmetric bilinear form $(\cdot\,,\cdot)$ and dual Coxeter number $h^\vee$. The vertex super-algebra $V^k(\fg_\super)$ \cite{kac:vertex} (see also \cite[Example 5.9]{heluani3} for the super-field version) is generated by super-fields $\bar{a}$, $a \in \fg$ with OPEs
\[ 
\bar{a}(z,\theta) \cdot \bar{b}(w,\zeta) \sim \frac{\overline{[a,b]}(w,\zeta)}{(z-w-\theta\zeta)} + (k+h^\vee)(a,b) \frac{(\theta-\zeta)}{(z-w-\theta\zeta)^2}, \qquad a,b \in \fg\,. 
\]
In terms of the components of
\[ 
\bar{a}(z,\theta) = \bar{a}(z) + \theta a(z)\quad\textrm{and}\quad \bar{b}(z,\theta)=\bar(b)(z)+\theta b(z) 
\] 
these OPE read
\[ 
\begin{aligned} a(z)\cdot b(w) &\sim \frac{[a,b](w)}{(z-w)} + (k+h^\vee) \frac{(a,b)}{(z-w)^2}, \\ a(z) \cdot \bar{b}(w) &\sim \frac{\overline{[a,b]}(w)}{z-w}, \\ \bar{a}(z) \cdot \bar{b}(w) &\sim \frac{(k+h^\vee) (a,b)}{z-w}\,. 
\end{aligned} 
\]
The following proposition is proved in \cite[Prop. 2.14]{heluani9} as a particular case of a construction of Getzler \cite{getzler}.

\begin{prop}\label{prop:bialgebras}
Let $\fl$ be a Lie bialgebra and $\fg := \fl \oplus \fl^*$ its Drinfeld double. Let $h^\vee$ be the dual Coxeter number of $\fg$. Let $\left\{ e_i \right\}_{i=1}^n$ be a basis of $\fl$ and $\left\{ e^i \right\}_{i=1}^l$ its dual basis of $\fl^*$. The super-field
\[ 
J = \frac{1}{k+h^\vee} \sum_{i=1}^n \bar{e}^i \bar{e}_i \in V(\fg_\super)\,, 
\]
generates a copy of the $N=2$ super-conformal vertex algebra of central charge $c = 3 \dim \fl$  in $V^k(\fg_\super)$. 
\end{prop}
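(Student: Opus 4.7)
My plan is to verify the proposition by directly computing the self-OPE of the super-field $J$ inside $V^k(\fg_\super)$ and matching it against the defining OPE \eqref{eq:n=2susyopes}. As observed in \ref{no:n=2susy}, the single super-field $J$ already generates the entire $N=2$ super-conformal algebra (one recovers $H$ from the first-order pole of $J\cdot J$, and then $G^\pm$ from the even and odd components of $J$ and $H$); so it suffices to establish the one identity
\[
J(z,\theta)\cdot J(w,\zeta) \sim \frac{H(w,\zeta)}{z-w-\theta\zeta} + \frac{c}{3}\frac{\theta - \zeta}{(z-w-\theta\zeta)^3},
\]
with no second-order pole, and verify that the $H$ produced in the first-order pole is a Virasoro super-field of the correct weight.

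First I would unpack the data of the Drinfeld double: the canonical invariant pairing on $\fg=\fl\oplus\fl^*$ satisfies $(e_i,e^j)=\delta_i^j$ and $(e_i,e_j)=(e^i,e^j)=0$, so $\{\bar e_i\}$ and $\{\bar e^i\}$ form essentially dual bases under the SUSY OPE given in the excerpt. This immediately explains, at the level of numerology, where $c=3\dim\fl$ must come from: the triple pole in the self-OPE must arise from a fully double contraction, which produces a scalar $\sum_{i,j}(k+h^\vee)^2(e^i,e_j)(e_i,e^j)=(k+h^\vee)^2\dim\fl$, and the two $(k+h^\vee)$ factors cancel the normalization $1/(k+h^\vee)^2$ in the definition of $J$.

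Next I would execute the super non-commutative Wick formula on $(\bar e^i\bar e_i)(z,\theta)\cdot (\bar e^j\bar e_j)(w,\zeta)$. The expansion splits into three families of terms: (i) the fully normally ordered piece, which is regular; (ii) the single-contraction terms, which produce a first-order pole whose coefficient is a super-field $H(w,\zeta)$ expressed as a sum of normally ordered products $\overline{[e^i,e_j]}\,\bar e^j\bar e_i$, etc., together with $\susy$-derivative contributions; and (iii) the double contraction contributing the cubic pole with coefficient $\dim\fl$. Parity considerations for pairs of odd super-fields, combined with the grading structure of the SUSY OPE, should be organized to show that no term of singularity order two survives.

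The main obstacle will be the first-order pole: one must verify that the super-field $H$ so obtained is genuinely the $N=2$ super-conformal vector, which forces the correct OPEs \eqref{eq:n=2susyopes} of $H$ with itself and with $J$. This is where the bialgebra compatibility becomes essential — the Jacobi identity on the double $\fg=\fl\oplus\fl^*$ (encoding both the Lie bracket on $\fl$ and the cobracket dual to the bracket on $\fl^*$) is exactly what allows the cross-contraction terms to reassemble into $\susy H$, $\partial H$ and the required multiple of $H$, and to cancel any obstruction to the vanishing of the second-order pole. Rather than verify each OPE in \eqref{eq:n=2susyopes} directly, I would invoke the $N=2$ SUSY reconstruction principle to reduce the entire verification to the clean self-OPE above, and dispatch the bialgebra bookkeeping through a uniform index calculation once, using the symmetry $(e^i,e_j)=(e_j,e^i)$ to pair up conjugate contractions.
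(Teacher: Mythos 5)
The paper does not actually prove Proposition \ref{prop:bialgebras}; it quotes it from \cite[Prop.~2.14]{heluani9}, where it is obtained as a special case of Getzler's construction \cite{getzler}. Your plan to reprove it by a direct OPE computation is legitimate and is essentially what that reference does, but as written it has one genuine gap: the ``$N=2$ SUSY reconstruction principle'' you invoke in the last step does not exist. Knowing that
\[
J(z,\theta)\cdot J(w,\zeta)\ \sim\ \frac{H(w,\zeta)}{z-w-\theta\zeta}\;+\;\frac{c}{3}\,\frac{\theta-\zeta}{(z-w-\theta\zeta)^3}
\]
for \emph{some} super-field $H$ does not imply the remaining relations of \eqref{eq:n=2susyopes}. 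A counterexample lives inside the $N=2$ vertex algebra itself: set $J'=2J$. Then $J'\cdot J'$ has exactly the shape above with $H'=4H$ and $c'=4c$, yet $H'$ is not a super-conformal vector (the $H$-linear part of its self-OPE is $16$ times that of $H$, while super-conformality of $4H$ would force it to be $4$ times), and $J'$ is not primary of weight $1$ for $H'$. The remark in \ref{no:n=2susy} that $J$ ``generates'' the $N=2$ algebra is a statement about the abstract algebra in which all relations are already imposed; it is not a recognition criterion. So after computing $J\cdot J$ you must still verify $H\cdot J$ (primariness of $J$ of conformal weight $1$) and $H\cdot H$ (super-Virasoro of central charge $3\dim\fl$), or else identify $H$ with a super-field already known to be super-conformal. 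That identification is itself delicate: for $h^\vee\neq 0$, or for non-unimodular $\fl$, the field sitting in the first-order pole is not the bare Kac--Todorov vector of $V^k(\fg_\super)$ but carries Getzler's correction terms, so ``recognizing'' $H$ does not come for free.

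Two smaller points. The heuristic that the cubic pole is the product of the two double contractions cannot be taken literally in superspace, since $(\theta-\zeta)^2=0$; the term $\frac{c}{3}\frac{\theta-\zeta}{(z-w-\theta\zeta)^3}$ arises from the composition rule for SUSY $\Lambda$-brackets rather than from squaring $\frac{\theta-\zeta}{(z-w-\theta\zeta)^2}$, although your numerology $c=3\dim\fl$ is correct. Likewise, the vanishing of the order-two pole proportional to $J$ itself (from contracting one leg against the level term) is a genuine cancellation using the oddness of the $\bar e$'s and the duality of the isotropic bases $\{e_i\}$, $\{e^i\}$; asserting it ``by parity considerations'' is not yet a proof, and this piece of bookkeeping should be carried out explicitly.
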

\end{nolabel}

\begin{nolabel}\label{no:zero-coords}
Let $x^0,x_0^*$ be local coordinates on $T^2=\mathbb Z^2\backslash \mathbb R^2$. Super-loops on $T^2$ can be expanded\footnote{a rescaling analogous to the one used in Section \ref{rescaling} being understood.} as
\begin{equation}\label{eq:zero-coords}
\begin{aligned}
x^0(z,\theta) = \log(z) W^0 + \sum_{n \in \mathbb{Z}} x^0_n z^{-n} + \theta \sum_{n \in \frac{1}{2} + \mathbb{Z}} \psi^0_{n} z^{-n-1/2}, \\ 
x^*_0(z,\theta) = \log(z) \omega^*_0 + \sum_{n \in \mathbb{Z}} x^*_{0,n} z^{-n} + \theta\sum_{n \in \frac{1}{2} + \mathbb{Z}} \psi^*_{0,n} z^{-n-1/2}. 
\end{aligned}
\end{equation}
The standard quantization of $\cL^s T^2$ is obtained by requiring the coefficients of these expansions to satisfy the canonical commutation relations
\begin{equation}\label{eq:zero-coords-commut}
[W^0, x^*_0] = [W^*_0, x^0] = \hbar, \qquad [\psi^*_{0,n}, \psi^0_{m}] = n [x^*_{0,m},x^0_n] = \delta_{m,-n} \hbar\,. 
\end{equation}
Denote by $\cL_{T^2}$ the $\frac{1}{2} + \mathbb{Z}$ Lie super-algebra with commutation relations \eqref{eq:zero-coords-commut} and by $\cH_{T^2}$ its module induced from the action of the degree-$0$ part of $\cL_{T_2}$ on $C^\infty (T^2)$. Combining with the $\cL$-module structure of $\cH$ we obtain a $\brcLe$-module structure on (a completion of) $\cHe$ induced by the action of the degree-$0$ part of $\cLe$ on $C^\infty(\Xe)$.

\end{nolabel}
\begin{nolabel}\label{no:tangent-bundle}
Let $\fge$ be the $8$-dimensional Lie algebra obtained as a sum of the $6$-dimensional Lie algebra $\fge$ and a two-dimensional commutative algebra $\mathfrak t^2$. We endow $\mathfrak t^2$ with an invariant pairing of signature $(1,1)$ so that the natural invariant pairing on $\fg$ extends to a natural invariant pairing $(\cdot,\cdot)$ on $\fge$ of signature $(4,4)$. Since $\fge$ is nilpotent, then $h^\vee=0$ and we can set $k=1$ in the construction of $V(\brfge_\super)$ since for any two non-vanishing level the algebras $V^k(\brfge_\super)$ are isomorphic. Here $\brfge_\super=\brfge\ltimes \brfge [1]$ is constructed as in \ref{no:super-nilmanifold}. An argument similar to that of \ref{no:in-terms-of-affine} shows that $\cHe$ has a natural structure of $V(\brfge_\super)$-module.

The $8$-dimensional real nilpotent group $\Ge$ has Lie algebra $\fge$ and can be viewed as an extension of $\Ve$ by $\brVe^*$. $\Ge$ contains the discrete co-compact subgroup $\Gammae$, which is a nontrivial extension of $\Lambdae$ by $\brLambdae^vee$. The tangent bundle of the nilmanifold $\Xe=\brGammae\backslash\Ge$ is trivial and isomorphic to $\brXe \times \brfge$. 

Let $\cJ$ be a $\brGe$-equivariant complex structure on $\Xe$ i.e.\ a right-invariant endomorphism of $T\brXe$. Equivalently, $\cJ$ can be thought of as an endomorphism of $\fge$. Such a $\cJ$ gives rise to a decomposition $\fge \simeq \fl \oplus \overline{\fl}$ where $\fl$ (resp. $\overline{\fl}$) is the $\sqrt{-1}$-eigenspace of $\cJ$ (resp. the $-\sqrt{-1}$ eigenspace). Since $\cJ$ is an integrable almost complex structure, $\fl$, $\overline{\fl}$ are both subalgebras of $\fge$. The natural symmetric invariant pairing $(\cdot\,,\cdot)$ of $\fge$ gives rise to a metric of signature $(4,4)$ on $T\brXe$ which can be extended to the complexification $(T\brXe)\otimes \mathbb C$. From now on we restrict our attention to equivariant complex structures $\cJ$ that are $(\cdot\,,\cdot)$-orthogonal. It follows that each $\fl$ and $\overline{\fl}$ is isotropic with respect to $(\cdot\,,\cdot)$ and the triple $(\fge, \fl, \overline{\fl}$) is a Manin triple. Proposition \ref{prop:bialgebras} then gives an $N=2$ structure of central charge $c=12$ on $V(\brfge_\super)$ which therefore acts on $\cHe$. Summarizing, we have
\begin{thm*}
For each $\brGe$-equivariant $(\cdot\,,\cdot)$-orthogonal complex structure $\cJ$ on $\Xe$ there exists a corresponding $N=2$ super-conformal structure on $\cHe$ of central charge $c=12$. 
\end{thm*}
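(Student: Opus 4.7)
The plan is to interpret the data provided by $\cJ$ as a Manin triple on the complexification of $\brfge$, invoke Proposition \ref{prop:bialgebras} (Getzler's construction) to produce an $N=2$ super-conformal vertex sub-algebra inside $V(\brfge_\super)$, and then transport this structure to $\cHe$ via the $V(\brfge_\super)$-module structure established in \ref{no:tangent-bundle}.

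\emph{Step 1 (Manin triple).} Let $\fl,\overline{\fl}\subset \brfge\otimes\mathbb{C}$ denote the $\pm\sqrt{-1}$-eigenspaces of the $\mathbb{C}$-linear extension of $\cJ$. Since $\cJ^2=-\id$ and $\cJ$ is $(\cdot\,,\cdot)$-orthogonal, both eigenspaces are isotropic for the $\mathbb{C}$-bilinear extension of $(\cdot\,,\cdot)$. The pairing on $\brfge$ has signature $(4,4)$ and each eigenspace has complex dimension $4$, so the induced pairing $\fl\times\overline{\fl}\to\mathbb{C}$ is non-degenerate. Integrability of the almost complex structure $\cJ$ (which is automatic for a left-invariant complex structure whose eigenspaces are Lie subalgebras) yields that $\fl$ and $\overline{\fl}$ are Lie subalgebras of $\brfge\otimes\mathbb{C}$. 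Hence $(\brfge\otimes\mathbb{C},\fl,\overline{\fl})$ is a Manin triple and $\fl$ inherits a Lie bialgebra structure with Drinfeld double $\brfge\otimes\mathbb{C}$.

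\emph{Step 2 (Getzler's formula).} Since $\brfge$ is nilpotent, $h^\vee=0$, and as remarked in \ref{no:tangent-bundle} we may take $k=1$. Choose any basis $\{e_i\}_{i=1}^4$ of $\fl$ and the dual basis $\{e^i\}_{i=1}^4$ of $\overline{\fl}$ with respect to $(\cdot\,,\cdot)$. By Proposition \ref{prop:bialgebras} the super-field
$$J \;=\; \sum_{i=1}^{4}\bar{e}^{\,i}\,\bar{e}_i \;\in\; V(\brfge_\super)$$
generates a copy of the $N=2$ super-conformal vertex algebra of central charge $c=3\dim_\mathbb{C}\fl=12$ inside $V(\brfge_\super)$.

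\emph{Step 3 (Transport to $\cHe$).} By \ref{no:tangent-bundle}, $\cHe$ carries a canonical $V(\brfge_\super)$-module structure. Restricting this action along the embedding of the $N=2$ super-conformal vertex algebra constructed in Step 2 produces the desired $N=2$ super-conformal structure of central charge $12$ on $\cHe$, manifestly depending only on $\cJ$ up to the choice of basis of $\fl$ (which, by Proposition \ref{prop:bialgebras}, does not affect the resulting subalgebra).

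The main obstacle is the bookkeeping in Step 1: one must verify that an $\brGe$-equivariant $(\cdot\,,\cdot)$-orthogonal $\cJ$ genuinely produces a \emph{complex} Manin triple in the form required by Proposition \ref{prop:bialgebras}, checking in particular that $\fl$ and $\overline{\fl}$ are closed under the bracket of $\brfge\otimes\mathbb{C}$ (equivalent to the vanishing of the Nijenhuis tensor for a left-invariant $\cJ$) and that the chosen dual bases are compatible with the $\mathbb{C}$-bilinear, rather than Hermitian, extension of $(\cdot\,,\cdot)$. Once this is in place, Steps 2 and 3 are a direct application of the already-stated results.
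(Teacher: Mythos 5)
Your proposal is correct and follows essentially the same route as the paper: eigenspace decomposition of the complexified $\brfge$ into isotropic subalgebras yielding a Manin triple, Proposition \ref{prop:bialgebras} with $k=1$, $h^\vee=0$ giving the $N=2$ structure of central charge $3\dim\fl=12$ inside $V(\brfge_\super)$, and transport to $\cHe$ via its $V(\brfge_\super)$-module structure. The extra checks you flag (isotropy from orthogonality, subalgebra closure from integrability) are exactly the observations the paper makes in \ref{no:tangent-bundle}.
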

\end{nolabel}

\section{Mirror symmetry}\label{sec:mirror}

In this section we realize the Hilbert super-space $\cHe$ together with the $N=2$ super-conformal structures described in \ref{no:tangent-bundle} in terms of super-symmetric sigma-models with $4$-dimensional targets. In particular we show how the $N=2$ super-conformal structure attached to the Kodaira-Thurston nilmanifold is mirror dual to the $N=2$ super conformal structure of a complex torus twisted by a holomorphic gerbe.

\begin{nolabel}\label{no:kummer}
The manifold $\Xe$ is a $T^4$-fibration over the torus $Y:=\brLambdae \backslash \brVe$. The $3$-form $\det$ on $Y$ represents a non-trivial class in $H^3(Y, \mathbb{Z})$ as in \ref{no:twisted-Poisson-structure}. We consider the complexified standard Courant algebroid on $Y$ twisted by $\det$ i.e.\ $E=TY \oplus T^*Y$ with the \emph{twisted Dorfman bracket} defined by by 
\begin{equation}
\label{eq:dorfman-bracket}
[ \tau + \zeta, \tau'+\zeta'] = Lie_\tau (\tau'+\zeta') - d (\tau', \zeta) + \det(\tau,\tau', \cdot)\,, 
\end{equation}
for all $\tau,\tau' \in TY$ and $\zeta,\zeta' \in T^*Y$, where $Lie_\tau$ denotes the usual Lie derivative with respect to $\tau$. Using the coordinates $\left\{ x^i \right\}_{i=0}^3$ on $Y$, $TY$ and $T^*Y$ are trivialized by the global sections $\partial_{x^i}$ and $dx^i$. The only non-trivial trivial twisted Dorfman brackets between these sections are
\[ 
[\partial_{x^i}, \partial_{x^j} ] = \varepsilon_{ijk} dx^k, \qquad i,j,k=0,\ldots,3\,, 
\]
where $\varepsilon_{ijk} dx^i dx^j dx^k$ is a local expression for $\det$ and the totally antisymmetric tensor is extended by setting $\varepsilon_{ijk} =0$ if at least one of the indices is zero. In particular, setting $\beta_i = \partial_{x^i}$, $\alpha^i = dx^i$ yields a trivialization $E \simeq Y \times \brfge$.

Consider the $N=1$ super-symmetric sigma-model with target $(Y,\det)$ and its Hilbert super-space of states $\cH_Y$, thought of as a quantization of the algebra of local functions on the infinite-dimensional manifold $\cL^s Y$. The connected components of $\cL^s Y$ are labeled by  $\Lambdae = \pi^1(Y)$ so that, similarly to \cite{heluanaldi}, one obtains the decomposition 
\[
\cH_Y = \bigoplus_{\omega \in \Lambdae} \cH_{Y,\omega}\,. 
\]
Vectors in $v \in \cH_{Y, \omega}$ are referred to as having \emph{winding} $\omega$.

The correspondence between the global frame $\{\alpha^i,\beta_i\}$ of $E$ and the currents generating $V(\brfge_\super)$ as in \ref{no:super-g}, make $\cH_Y$ into 
a $V(\brfge_\super)$-module structure. In absence of the gerbe, each summand $\cH_{Y, \omega}$ is the $V(\brfge_\super)$-module generated by the zero modes which are identified with the Fourier basis of $L^2(Y)$ (see for example \cite{kapustin}). As explained in \cite[Appendix]{heluanaldi}, the flux defined by $\det$ can be accounted for as follows. For each $\omega \in \Lambdae \simeq H_1(Y)$, let $\cL_\omega$ be the line bundle on $Y$ with first Chern class $\iota_\omega \det \in H^2(Y)$. If $L^2(\cL_\omega)$ denotes the space of $L^2$-sections of $\cL_\omega$, the space of $0$-modes of $\cH_Y$ is canonically identified with $\bigoplus_{\omega \in \Lambdae} L^2(\cL_\omega)$. We have the following

\begin{prop*}[{\cite[Appendix]{heluanaldi}}]
There is a canonical isomorphism \[ L^2 (\Xe) \simeq \bigoplus_{\omega \in \Lambda} L^2(\cL_\omega). \]
\end{prop*}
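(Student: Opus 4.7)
The plan is to realize $\Xe$ as a principal torus bundle over $Y$, Fourier-decompose $L^2(\Xe)$ along the fibers, and identify each isotypic component with the $L^2$-sections of the line bundle $\cL_\omega$ by a direct computation of the equivariance cocycle. Concretely, the center $V^*$ of $G$ together with one of the two $\mathbb R$-factors of $\brGe = G\times \mathbb R^2$ assemble into a $4$-dimensional abelian normal subgroup $V^*\oplus \mathbb R\subset \brGe$ whose intersection with $\brGammae = \Gamma\oplus \mathbb Z^2$ is the lattice $\Lambda^\vee\oplus \mathbb Z$. The quotient of $\brGe$ by this subgroup is $\Ve$, so passing to the double quotient yields a projection $\pi\colon \Xe \to \Lambdae\backslash \Ve = Y$ that exhibits $\Xe$ as a principal $T^4$-bundle over $Y$.

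Next, I would apply Peter--Weyl on the fiber torus. A $\brGammae$-invariant function $f$ on $\brGe$ belonging to the isotypic component of a character $\omega$ of the fiber torus has the form $f(v^*,v,t_1,t_2) = \hat f_\omega(v,t_1)\exp(2\pi \sqrt{-1}\langle \omega,(v^*,t_2)\rangle)$, with $\hat f_\omega$ a function on $\Ve$. The residual invariance under the quotient lattice $\Lambdae$ imposes a constraint on $\hat f_\omega$: applying it to a representative lift of $\omega^*\in \Lambda$ and using the nonabelian multiplication
\[ (v^*,v)\cdot (w^*,w) = \bigl(v^* + w^* + \tfrac{1}{2}\det(v,w,\cdot),\, v + w\bigr) \]
yields the quasi-periodicity
\[ \hat f_\omega(v+\omega^*, t_1) = \hat f_\omega(v,t_1)\exp\bigl(-\pi \sqrt{-1}\,\omega(\det(\omega^*, v, \cdot))\bigr), \]
which is exactly the transition cocycle of a hermitian line bundle on $Y$ whose Chern connection has curvature equal to the closed $2$-form $\iota_\omega \det$. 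Hence $\hat f_\omega$ is an $L^2$-section of $\cL_\omega$, and summing over $\omega$ produces the required isomorphism.

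The main obstacle is the cocycle computation in the second step: one must verify that the factor $\tfrac{1}{2}$ in the nonabelian multiplication of $\brGe$, combined with the Fourier character $\omega$, reproduces exactly the integral class $\iota_\omega \det\in H^2(Y,\mathbb Z)$ as the first Chern class of $\cL_\omega$, so that the bundle matches the one described in the paper (and in particular has the right periodicities along the complementary lattice directions). Once this identification is established, the isomorphism $L^2(\Xe) \simeq \bigoplus_\omega L^2(\cL_\omega)$ is a consequence of Plancherel's theorem for the fiber torus and the fact that fiberwise Fourier decomposition is an isometry.
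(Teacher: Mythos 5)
The paper does not actually prove this proposition here --- it is imported verbatim from the appendix of \cite{heluanaldi} --- and your argument (exhibit $\Xe$ as a principal $T^4$-bundle over $Y$ via the central subgroup $V^*\oplus\mathbb R$, Fourier-decompose along the fibre, and read off the automorphy factor of $\cL_\omega$ from the group law) is exactly the mechanism used there, extended by the harmless $T^2$ factor. So the approach is correct; your decomposition also makes it clear that the index set in the displayed statement should be $\Lambdae$ (the character lattice of the $T^4$ fibre), matching the sentence preceding the proposition rather than the $\Lambda$ printed in it.

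On the one step you flag as the ``main obstacle'': the multiplier $e_{\omega^*}(v)=\exp\bigl(-\pi\sqrt{-1}\,\det(\omega^*,v,\omega)\bigr)$ does \emph{not} by itself satisfy the cocycle identity for the abelian lattice $\Lambda$ with the naive lifts $(0,\omega^*)$; composing two translations produces the extra sign $\exp\bigl(-\pi\sqrt{-1}\,\det(\omega_1^*,\omega_2^*,\omega)\bigr)=\pm 1$. This is precisely cancelled by the central correction $\tfrac{1}{2}\det(\omega_1^*,\omega_2^*,\cdot)$ in the product $(0,\omega_1^*)\cdot(0,\omega_2^*)$ acting through the character $\omega$, together with the integrality of $\omega$ on $\Lambda^\vee$; in other words the cocycle is one for the nonabelian lattice $\brGammae$, not for $\Lambdae$, and the factor $\tfrac12$ is exactly what makes the two computations agree. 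Once this is noted, the Appell--Humbert form of the resulting multiplier is the alternating bilinear form $E(\gamma_1,\gamma_2)=\det(\gamma_1,\gamma_2,\omega)=(\iota_\omega\det)(\gamma_1,\gamma_2)$, which is integral on $\Lambdae$, so $c_1(\cL_\omega)=\iota_\omega\det$ as required (the curvature of the Chern connection is this form up to the usual factor of $2\pi\sqrt{-1}$). With that verification supplied, your Plancherel argument closes the proof.
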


\noindent
Through this isomorphism, the natural $\brGe$-action on $L^2(\Xe)$ defines a $\brGe$-action on $\bigoplus_{\omega \in \Lambdae} L^2(\cL_\omega)$. Inducing the differential of this action endows $\cH_Y$ with the structures of $V(\brfge_\super)$-module. By construction we obtain a canonical identification $\cHe \simeq \cH_Y$. 
\end{nolabel}

\begin{nolabel}\label{no:complex} The $4$-dimensional abelian Lie group $\Ve$ acts on $Y = \brLambdae \backslash \brVe$ by right translations. Let $\cJ$ be an equivariant generalized complex structure.  We obtain a decomposition $\fge = \fl \oplus \overline{\fl}$ with each summand being an isotropic subalgebra. By Proposition \ref{prop:bialgebras} this endows $\cH_Y$ with an $N=2$ super-conformal structure and we obtain the following
\begin{prop*} \hfill
\begin{enumerate}
\item For each equivariant generalized Calabi-Yau structure on $Y$ there exists an associated $N=2$ super-symmetric structure of central charge $12$ on $\cH_Y$. 
\item 
There exists a correspondence between equivariant generalized Calabi-Yau structures on $Y$ and equivariant complex structures on $\Xe$ compatible with the tautological pairing on $T\brXe$. 
\item The natural isomorphism $\cHe \simeq \cH_Y$ of \ref{no:kummer} intertwines the $N=2$ super-conformal structures under the correspondence in b).
\end{enumerate}
\end{prop*}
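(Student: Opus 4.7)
The plan is to reduce all three parts to the Manin-triple / Getzler construction of Proposition \ref{prop:bialgebras}, and then check that the two geometric inputs (on $Y$ and on $\Xe$) produce the same algebraic data.

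First, for part a), I would start from an equivariant generalized complex structure $\cJ$ on $Y$ and observe that, because $Y = \brLambdae\backslash \brVe$ is a quotient of an abelian group and we are working equivariantly with respect to translation, the twisted Courant algebroid $E = TY\oplus T^*Y$ is trivialized by \ref{no:kummer} as $E\simeq Y\times \brfge$, with Dorfman bracket on constant sections matching the Lie bracket of $\brfge$, and $\cJ$ becomes an endomorphism $\cJ \in \End(\brfge)$ orthogonal with respect to the natural pairing $(\cdot,\cdot)$ of signature $(4,4)$. Its $\pm \sqrt{-1}$-eigenspaces produce a decomposition $\brfge = \fl \oplus \overline{\fl}$ into isotropic Lie subalgebras, i.e.\ a Manin triple. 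Since $h^\vee = 0$ for the nilpotent algebra $\brfge$, Proposition \ref{prop:bialgebras} produces a copy of the $N=2$ super-vertex algebra of central charge $c = 3\dim \fl = 12$ inside $V(\brfge_\super)$. Composing with the module structure $V(\brfge_\super)\curvearrowright \cH_Y$ from \ref{no:kummer} yields the desired $N=2$ super-conformal structure on $\cH_Y$.

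For part b), the key observation is that the Lie algebra of $\Ge$ appearing in \ref{no:tangent-bundle} is the \emph{same} $8$-dimensional Lie algebra $\brfge$ that appears as the trivialization of the twisted Courant algebroid $E$ on $Y$, and the ad-invariant pairings agree. I would therefore spell out a bijection
\[
\{\cJ \in \End(\brfge) : \cJ^2 = -\id,\ \cJ \text{ orthogonal},\ \pm\sqrt{-1}\text{-eigenspaces are subalgebras}\}
\]
on the one hand and, on the other hand, both (i) $\brGe$-equivariant $(\cdot,\cdot)$-orthogonal complex structures on $\Xe$ (via the trivialization $T\brXe \simeq \brXe\times \brfge$ of \ref{no:tangent-bundle}) and (ii) $\brVe$-equivariant generalized complex structures on $Y$ compatible with the tautological pairing (via the trivialization of $E$). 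In both cases the integrability condition for the complex structure translates into $\fl,\overline{\fl}$ being Lie subalgebras of $\brfge$, which is the same condition, giving the bijection.

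Part c) is then essentially a formal consequence. Both the $N=2$ structure on $\cHe$ built in \ref{no:tangent-bundle} and the $N=2$ structure on $\cH_Y$ built in part a) come from applying Proposition \ref{prop:bialgebras} to the \emph{same} Manin triple $(\brfge,\fl,\overline{\fl})$, producing the \emph{same} super-field $J = (k+h^\vee)^{-1} \sum_i \bar e^i \bar e_i \in V(\brfge_\super)$. All that remains is to check that the isomorphism $\cHe \simeq \cH_Y$ of \ref{no:kummer} is in fact an isomorphism of $V(\brfge_\super)$-modules; this is built into the construction there, since both module structures are induced from the differential of the $\brGe$-action on zero modes followed by the identification of the generating currents $\bar a^i, \bar b_i$ coming from the frame $\{\alpha^i, \beta_i\}$ of $E$ (respectively, of $T\brXe$).

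I expect the main technical obstacle to be the careful verification in b) that the integrability condition for a generalized complex structure (closure of an isotropic subbundle of $E$ under the twisted Dorfman bracket, with the $\det$-twist built into \eqref{eq:dorfman-bracket}) matches, through the trivialization, the bracket of $\brfge$ that was used to define the complex structure on $\Xe$; this is precisely where the non-trivial $3$-form flux on $Y$ has to conspire with the non-abelian extension defining $\Ge$. Once this matching is established, the rest of the argument is formal.
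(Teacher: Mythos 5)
Your overall strategy coincides with the paper's: both arguments rest on the observation that $T\brXe$ and the twisted Courant algebroid $(T\oplus T^*)Y$ are trivial with the same fibre $\fge$ carrying the same invariant pairing, so that equivariant structures on either side are classified by splittings $\fge=\fl\oplus\overline{\fl}$ into isotropic subalgebras, and both then feed the resulting Manin triple into Proposition \ref{prop:bialgebras} to get the $N=2$ structure of central charge $3\dim\fl=12$; part c) is formal for exactly the reason you give, since the two module structures are induced from the same frame-to-current identification.

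There is, however, one genuine omission. The proposition is stated for equivariant generalized \emph{Calabi-Yau} structures on $Y$, whereas your argument only ever produces and uses equivariant generalized \emph{complex} structures (i.e.\ orthogonal $\cJ\in\End\brfge$ with integrable eigenspaces). A priori a generalized complex structure need not be generalized Calabi-Yau, so the correspondence you set up in b) relates the wrong class of objects on the $Y$-side unless you show that the two notions coincide in this equivariant setting. That is in fact the only non-routine content of the paper's proof: invoking \cite{gualtieri-cavalcanti-nilmanifolds}, since $\fl$ is a nilpotent Lie algebra, any invariant isotropic splitting has trivial generalized canonical bundle, hence every equivariant generalized complex structure on $Y$ is automatically generalized Calabi-Yau. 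Adding this observation closes the gap; without it the statement as written is not established.
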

\begin{proof}
Recall that $T\brXe$ and the standard Courant algebroid of $(Y,\det)$ are both trivial with fibers isomorphic to $\fge$ i.e.\ $T\brXe \simeq \brXe \times \brfge$ and $(T\oplus T^*)Y \simeq Y\times \brfge$. Therefore, equivariant complex structures on $\Xe$ and equivariant generalized complex structures on $Y$ both correspond to splittings of $\fge$ by isotropic subalgebras $\fge = \fl \oplus \overline{\fl}$. The proposition follows then follows from the observation \cite{gualtieri-cavalcanti-nilmanifolds} that, since $\fl$ is a nilpotent algebra, any such splitting will have trivial generalized canonical bundle and thus any equivariant generalized complex structure on $Y$ is generalized Calabi-Yau. 
\end{proof}
\end{nolabel}

\begin{rem}
$\cH_Y$ is in fact a module over the chiral de Rham complex \cite{malikov} of $Y$, viewed as the $0$-winding sector of $\cH_Y$ or equivalently as the $V(\brfge_\super)$-module induced from $L^2(Y)$. The $N=2$ structure of $\cH_Y$  corresponding to the generalized Calabi-Yau structure of $Y$ is the one constructed in \cite{heluani9}. 
\end{rem}

\begin{ex}
Consider holomorphic coordinates $z^1 = x^0 + i x^1$, $z^2 = x^2+ i x^3$ on $Y$, where $\{x^i\}_{i=0}^3$ are real coordinates on each $S^1$ factor. With respect to this complex structure
\[ 
\det = \frac{1}{4} \left( dz^1 dz^2\overline{dz^2} - \overline{dz^1} dz^2 \overline{dz^2} \right)\,. 
\]
The $\sqrt{-1}$-eigenbundle $\fl$ of this generalized complex structure is generated by $\partial_{\overline{z^1}}, \partial_{\overline{z^2}}, dz^1, dz^2$ while $\overline{\fl}$ has a framing given by $\partial_{z^1}, \partial_{z^2}, \overline{dz^1}, \overline{dz^2}$. As Lie algebras $\fl = \mathcal{heis} \oplus \mathbb{C}$ is a sum of the $3$-dimensional Heisenberg Lie algebra spanned by $\partial_{\overline{z^1}}, \partial_{\overline{z^2}}$, $d z^2$ and the commutative Lie algebra $\mathbb{C}$ spanned by $dz^1$. The only non-trivial bracket in $\fl$ is given by $[\partial_{\overline{z^1}}, \partial_{\overline{z^2}}] =  dz^2$. 

Recall that the trivialization $(T \oplus T^*)Y \simeq Y \times \brfge$ is given by the identification $\partial_{x^i} = \beta_i$ and $dx^i = \alpha^i$. If we denote
\[ 
\begin{aligned}
\beta_{z^1} &= \beta_0 - i \beta_1, &&& \beta_{z^2} &= \beta_2 - i \beta_3, &&& \alpha^{z^1} &= \alpha^0 + i \alpha^{1}, &&& \alpha^{z^2} &= \alpha^1 + i \alpha^2, \\ 
\beta_{\overline{z^1}} &= \beta_0 + i \beta_1, &&& \beta_{\overline{z^2}} &= \beta_2 + i \beta_3, &&& \alpha^{\overline{z^1}} &= \alpha^0 - i \alpha^{1}, &&& \alpha^{\overline{z^2}} &= \alpha^1 - i \alpha^2\,, 
\end{aligned}
\]
the current $J$ generating the $N=2$ super-conformal structure given by Proposition \ref{prop:bialgebras} becomes
\[ 
J = \frac{1}{2} \left( \beta_{\overline{z^1}} \alpha^{\overline{z^1}} + \beta_{\overline{z^2}} \alpha^{\overline{z^2}} + \alpha^{z^1} \beta_{z^1} + \alpha^{z^2} \beta_{z^2} \right) = -\sqrt{-1} \left( \beta_0 \alpha^1 - \beta_1 \alpha^0 + \beta_2 \alpha^3 - \beta_3 \alpha^2 \right)\,. 
\] 
In terms of the scalar super-fields $x^*_i(z,\theta)$, $x^i(z,\theta)$ and their well defined currents \eqref{eq:currents-well-defined} we see that the $N=2$ super-conformal structure is generated by the super-field
\begin{multline}\label{eq:J1}
J = -\sqrt{-1} \Bigl( \susy x^*_0 \susy x^1 - \susy x^*_1 \susy x^0 + \susy x^*_2 \susy x^3 - \susy x^*_3 \susy x^2 -  \\  \frac{1}{2} \left( x^2 \susy x^0 \susy x^3 + x^3 \susy x^2 \susy x^0 + x^3 \susy x^1 \susy x^3 + x^2 \susy x^3 \susy x^2 \right)   \Bigr)
\end{multline}
\label{ex:complex}
\end{ex}
\begin{nolabel}\label{no:kodaira}
Consider the real $4$-dimensional Lie algebra $\mathbb{C} \oplus \mathcal{heis}$ where $\mathcal{heis}$ is the $3$-dimensional Heisenberg Lie algebra. Exponentiation yields the $4$-dimensional real Lie group $\mathbb{R} \times \mathcal{H}eis$ and its lattice $\mathbb{Z} \times \mathcal{H}eis(\mathbb{Z})$. Here $\mathcal{H}eis(\mathbb{Z})$ can be viewed as the group of upper triangular $3\times 3$ matrices with integer entries with its natural embedding in the real $3$-dimensional group $\mathcal{H}eis$. We have the corresponding $4$-dimensional nilmanifold $N := (\mathbb{Z} \times \mathcal{H}eis(\mathbb{Z})) \backslash (\mathbb{R} \times \mathcal{H}eis)$. It is a non-trivial $S^1$ fibration\footnote{The fiber is given by the copy of $\mathbb{R}$ given by the center of $\mathcal{H}eis$ modulo the copy of $\mathbb{Z}$ given by  center of $\mathcal{H}eis(\mathbb{Z})$} over $T^3$. 

Let us choose coordinates $\left\{ y^i \right\}_{i=0}^3$ on $N$ as follows. $y^0$ is a coordinate on the copy of $\mathbb{R} \subset \mathbb{R} \times \mathcal{H}eis$. And we identify $\mathcal{H}eis$ with $\mathbb{R}^3$ with coordinates $\left\{ y^i \right\}_{i=1}^3$ and multiplication given by
\[ \left( y^1, y^2, y^3 \right) \cdot \left( \tilde{y}^1,\tilde{y}^2, \tilde{y}^3 \right) = \left( y^1 + \tilde{y}^1, y^2 + \tilde{y}^2 + \frac{1}{2} y^1 \tilde{y}^3 - \frac{1}{2} y^3 \tilde{y}^1, y^3 + \tilde{y}^3 \right). \]

The standard Courant algebroid of $N$ is trivialized by the left-invariant vector fields and differential forms on $\mathbb{R} \times \mathcal{H}eis$. If the standard Courant algebroid of $N$ is identified with $N\times \brfge$ by setting
\begin{equation} \label{eq:nilmanifold-framing}
\begin{aligned}
\beta_0 &= \partial_{y^0}, &&& \beta_1 &= \partial_{y^1} - \frac{1}{2} y^3 \partial_{y^2}, &&& -\alpha^2 &= \partial_{y^2}, &&& \beta_3&= \partial_{y^3} + \frac{1}{2} y^1 \partial_{y^2} \\ 
\alpha^0 &= dy^0, &&& \alpha^1 &=dy^1, &&& -\beta_2 &= dy^2 - \frac{1}{2} y^1 dy^3 + \frac{1}{2} y^3 dy^1, &&& \alpha^3 &=dy^3\,,
\end{aligned}
\end{equation}
the standard Dorfman bracket defined by
\[ 
[\tau + \zeta, \tau'+\zeta'] = Lie_\tau (\tau'+\zeta') - d (\tau', \zeta)\,,
\]
for all $\tau,\tau' \in TN$ and $\zeta,\zeta' \in T^*N$ matches the commutators \eqref{eq:fgbrackets} of $\fge$. Let $\cH_N$ be the Hilbert super-space obtained from quantizing the Poisson super-manifold $T^* \cL^s N \simeq T^*[1] T^* \cL N$. The super-currents corresponding to the invariant sections \eqref{eq:nilmanifold-framing} of $(T\oplus T^*)N$ provide $\cH_N$ with the structure of a $V(\brfge_\super)$-module as follows.
The connected components of $\cL^sN$ are parametrized by conjugacy classes in $\mathcal{H}eis(\mathbb{Z})=\pi_1(N)$. For each such conjugacy class $\omega \in \pi^1(N)^\sim$, the space of $0$-modes is a copy of $L^2(N)$.
There is an embedding of $\mathbb{R} \times \mathcal{H}eis$ into the $8$-dimensional nilpotent group $\Ge$, which in terms of the coordinates $\left\{ y^i \right\}$ of $\mathbb{R} \times \mathcal{H}eis$ and $\left\{ x^i, x^*_i \right\}$ of $\Ge$ can be written as 
\[ 
(y^0, y^1, y^2, y^3) \mapsto \left( x^0, x^1, x^*_2, x^3 \right)\, . 
\]
As explained in \cite[Appendix]{heluanaldi} this embedding induces a natural identification \[ L^2(\Xe) \simeq \bigoplus_{\omega \in \pi^1(N)^\sim} L^2(N)\] intertwining the action of $\mathbb{R} \times \mathcal{H}eis$. The Hilbert super-space of states $\cH_N$ is then constructed as the representation of $V(\brfge_\super)$ induced from $\bigoplus_{\omega \in \pi^1(N)^\sim} L^2(Y)$. Hence, by construction, we have an identification $\cHe \simeq \cH_Y$.  
\end{nolabel}

\begin{nolabel}\label{no:calabi-yau-nilmanifold}
The $4$-dimensional group $\mathbb{R} \times \mathcal{H}eis$ acts on $N$ on the right. As in \ref{no:complex} an equivariant generalized complex structure $\cJ$ on $N$ produces a Manin triple $(\fge, \fl, \overline{\fl})$ and therefore an $N=2$ super-conformal structure of central charge $12$ on $\cH_N$. We obtain 

\begin{thm*}\hfill
\begin{enumerate}
\item There is a natural unitary identification $\cH_Y \simeq \cH_N$. 
\item There is a one-to-one correspondence between $\mathbb{R}^4$-equivariant generalized Calabi-Yau structures on $Y$ and $\mathbb{R}\times \mathcal{H}eis$-equivariant generalized Calabi-Yau structures on $N$. 
\item The isomorphism of a) intertwines the $N=2$ super-conformal structures given by the corresponding generalized Calabi-Yau structures of b) and Prop. \ref{prop:bialgebras}. 
\end{enumerate}
\end{thm*}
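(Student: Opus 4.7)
The plan is to reduce all three parts to the identifications of $\cHe$ constructed in \ref{no:kummer} and \ref{no:kodaira}, and then to transport the $N=2$ structure of Proposition \ref{prop:bialgebras} along a common Lie-algebraic framing. For part (a), I would chain the natural $V(\brfge_\super)$-module identifications $\cH_Y \simeq \cHe$ (end of \ref{no:kummer}) and $\cHe \simeq \cH_N$ (end of \ref{no:kodaira}). Both are constructed by inducing along $V(\brfge_\super)$ from a unitary isomorphism at the level of zero-modes: on the $Y$-side the decomposition $L^2(\Xe) \simeq \bigoplus_{\omega \in \Lambdae} L^2(\cL_\omega)$, and on the $N$-side the identification $L^2(\Xe) \simeq \bigoplus_{\omega \in \pi^1(N)^\sim} L^2(N)$ coming from the embedding $\mathbb R \times \mathcal{H}eis \hookrightarrow \Ge$. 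Since the induction preserves the canonical super-symmetric pairing \eqref{eq:pairing}, composing the two gives the required unitary identification $\cH_Y \simeq \cH_N$.

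For part (b), I would observe that the standard Courant algebroid of $(Y,\det)$ and that of $N$ are both globally trivialized as Courant algebroids by $\brfge$: on $Y$ via $\beta_i = \partial_{x^i}$, $\alpha^i = dx^i$ as in \ref{no:kummer}, and on $N$ via the invariant frame \eqref{eq:nilmanifold-framing}. Under these trivializations, equivariant generalized complex structures on either side compatible with the tautological pairing correspond to the same data, namely an orthogonal splitting $\brfge = \fl \oplus \overline{\fl}$ into isotropic Lie subalgebras, that is a Manin triple $(\brfge, \fl, \overline{\fl})$. The nilpotency argument of \cite{gualtieri-cavalcanti-nilmanifolds} already invoked in \ref{no:complex} further guarantees that every such structure is automatically generalized Calabi-Yau on both sides, giving the bijection of (b).

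For part (c), I would apply Proposition \ref{prop:bialgebras} to the common Manin triple $(\brfge, \fl, \overline{\fl})$ of part (b) to produce a universal generator
\[
J = \sum_{i=1}^{\dim \fl} \bar{e}^i \bar{e}_i \in V(\brfge_\super)
\]
of the associated $N=2$ super-conformal structure. Since the identification of part (a) is by construction $V(\brfge_\super)$-equivariant, the image of $J$ acts by the same operator on both sides, and hence all four generators $L, J, G^\pm$ of the $N=2$ algebra are simultaneously intertwined.

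The main obstacle I anticipate is the verification, underlying part (a), that the two different framings of the two Courant algebroids give rise to compatible $V(\brfge_\super)$-module structures once transported through $\cHe$. Concretely one must check that the super-currents attached to the sections $\{\alpha^i,\beta_i\}$ of $(T\oplus T^*)Y$ used in \ref{no:kummer} are sent, under the composed isomorphism $\cH_Y \simeq \cHe \simeq \cH_N$, to the super-currents attached to the frame \eqref{eq:nilmanifold-framing} of $(T\oplus T^*)N$. This reduces to the bookkeeping that the embedding $\mathbb R \times \mathcal{H}eis \hookrightarrow \Ge$ used in \ref{no:kodaira} and the projection $\Ge \to \brVe$ implicit in \ref{no:kummer} both realize the bracket relations \eqref{eq:fgbrackets} and the invariant pairing on $\brfge$ in the same normalization; once this identification of frames is settled, parts (a), (b) and (c) all follow mechanically.
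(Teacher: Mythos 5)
Your proposal is correct and follows essentially the same route as the paper, which derives the theorem directly from the constructions of \ref{no:kummer}, \ref{no:complex} and \ref{no:kodaira}: both Hilbert spaces are identified with $\cHe$ by induction from the zero-mode identifications of $L^2\brXe$, both Courant algebroids are trivialized by $\brfge$ so that equivariant generalized complex structures on either side are the same Manin-triple data (with the Cavalcanti--Gualtieri nilpotency argument giving the Calabi--Yau property), and Proposition \ref{prop:bialgebras} applied to the common Manin triple is intertwined because the identification is $V(\brfge_\super)$-equivariant. Your closing remark correctly isolates the only substantive check, namely that the two framings induce matching $V(\brfge_\super)$-module structures through $\cHe$, which is exactly what the frame \eqref{eq:nilmanifold-framing} is designed to guarantee.
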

\end{nolabel}

\begin{ex}
The manifold $N$ admits the equivariant symplectic form
\[
dy^0dy^1 + \left( dy^2 + \frac{1}{2} y^3 dy^1 \right) dy^3\,. 
\]
The graph $\fl$ of the corresponding generalized complex structure on $(T \oplus T^*) N \simeq N \times \brfge$ is isomorphic to $TN$ and is spanned by $\partial_{y^0} -\sqrt{-1} dy^1$, $\partial_{y^2} -\sqrt{-1} dy^3$ as well as 
\[ 
\partial_{y^1} -\frac{1}{2} y^3 \partial_{y^2} +\sqrt{-1} dy^0\,, \qquad \partial_{y^3} + \frac{1}{2} y^1 \partial_{y^2} +\sqrt{-1}\left( dy^2 + \frac{1}{2} y^3 dy^1 - \frac{1}{2} y^1 dy^3\right)\, . 
\] 
The corresponding $N=2$ super-conformal structure is identified with the one in $\cH_Y$ given in Example \ref{ex:complex}.
\end{ex}

\begin{nolabel} We would like to describe moduli of equivariant generalized complex structures on $(Y,\det)$ and $N$ from the point of view of equivariant $(\cdot\,,\cdot)$-orthogonal complex structures on $\Xe$.
Equivariant generalized complex structures on a nilmanifold are in bijection \cite{gualtieri-cavalcanti-nilmanifolds} with left-invariant pure spinors lines $\mathbb C^*\rho$ such that $d\rho=0$ and $\rho\wedge \overline \rho>0$. The same holds in presence of a closed left-invariant 3-form $H$, provided that $d$ is replaced by the twisted de Rham differential $d_H=d+H\wedge$. Consider first the component of the moduli space of equivariant generalized complex structures on $(Y,\det)$ represented by pure spinors of even degree. Such a pure spinor $\rho$ is a $\mathbb C$-linear combination of invariant $d_{\det}$-closed forms of even degree on $Y$ i.e.\ 
\begin{multline*}
\rho=a_{01}dx^0dx^1+a_{02}dx^0dx^2+a_{03}dx^0dx^3+a_{12}dx^1dx^2+a_{13}dx^1dx^2+\\ a_{23}dx^2dx^3+a_{0123} dx^0dx^1dx^2dx^3\,.
\end{multline*}
In particular, every such spinor gives rise to a generalized complex structure of type $2$ and thus the projection $\rho_2$ of $\rho$ onto $\Omega^2(Y)$ defines an ordinary complex structure on $Y$. Once $\rho_2$ is chosen, no further conditions are imposed on $a_{0123}$. To summarize, $\rho$ is a spinor of an equivariant generalized complex structure on the gerby torus $Y$ if and only if $\rho_2\wedge\rho_2=0$ and $\rho_2\wedge \overline \rho_2>0$. This allows us to parametrize equivariant generalized complex structures on $(Y,\det)$ by the open subset of the hypersurface $a_{01}a_{23}-a_{02}a_{13}+a_{03}a_{12}=0$
in $\mathbb C \mathbb P^6$ on which 
\[
a_{01}\overline a_{23}-a_{02}\overline a_{13}+a_{03}\overline a_{12} + \overline a_{01}a_{23}-\overline a_{02} a_{13}+\overline a_{03}a_{12}>0\, . 
\]
If one wishes to quotient by infinitesimal generalized diffeomorphisms (as opposed to ordinary diffeomorphisms), or equivalently by orbits of the adjoint action with respect to the (twisted) Dorfman bracket, then all choices of $a_{0123}$ lead to isomorphic generalized complex structures. Therefore, we may set $a_{0123}=0$ and realize the moduli space as an open subset of the hypersurface in $\mathbb C\mathbb P^5$ which is the image of the Pl\"ucker embedding. This construction is a particular case of the realization of the moduli spaces of complex structures on tori as open subsets of Grassmannians described in \cite{atiyah-tori}.
\end{nolabel}
\begin{nolabel}
Without loss of generality, we may write $\rho=\eta_1\eta_2+a_{0123}\eta_1\eta_2\overline\eta_1\overline \eta_2$ for some invariant $1$-forms
\[
\eta_i = \sum_{j=0}^3\eta_{ij}dx^j\,;\quad i=1,2
\]
such that
\[
\eta_1\eta_2\overline\eta_1\overline \eta_2=dx^0dx^1dx^2dx^3=dx^0\det\,.
\]
Let $\{\xi_1,\xi_2,\overline\xi_1\,\overline \xi_2\}$ be a global frame for $TY\otimes \mathbb C$ with dual frame $\{\eta_1,\eta_2,\overline \eta_1,\overline \eta_2\}$. The $\sqrt{-1}$-eigenbundle of the generalized complex structure on $(Y,\det)$ associated with $\rho$ is the $C^\infty(Y)$-module generated by $\{\eta_1,\eta_2,\overline\xi_1-a_{0123}\overline\eta_2, \overline\xi_2+a_{0123}\overline \eta_1\}$. Therefore, the $\sqrt{-1}$-eigenspace of the $\brGe$-equivariant complex structure $\cJ$ on $\Xe$ induced by $\rho$ is the $C^\infty(\Xe)$-module generated by $\{\xi_1^*,\xi_2^*,\overline\xi_1-a_{0123}\overline \xi_2^*,\overline \xi_2+a_{0123}\overline \xi_1^*\}$, where $\xi_i^*=\sum_{j=0}^3\eta_{ij}\partial_{x_j^*}$ for $i=1,2$. Hence, $\cJ$ is uniquely defined by the left-invariant pure spinor
\[
\Omega_{\cJ}=\eta_1\eta_2(\overline\eta_1^*-a_{0123}\overline\eta_2)(\overline \eta_2^*+a_{0123}\overline\eta_1)
\]
on $\Xe$, where
\[
\eta_i^*=\sum_{j=0}^3\left(\eta_{ij}dx_j^*-\frac{1}{2}\varepsilon_{jkl}x^kdx^l\right)\,;\quad i=1,2\,.
\]
Here we are implicitly using the triviality of the generalized tangent bundles of $\Xe$ and $(Y,\det)$ to identify, abusing the notation, $dx^i$ with $\pi^*dx^i$, $\partial_{x^i}$ with $\pi_*\partial_{x^i}$ etc. Since the twisted Dorfman bracket on invariant sections of $(T\oplus T^*)Y$ matches the Lie bracket on left-invariant sections of $T\brXe$, we see immediately that complex structures on $\Xe$ corresponding to different values of $a_{0123}$ are isomorphic. 
\end{nolabel}

\begin{nolabel}
We may further assume that $\eta_1$ is of the form $\eta_1=dx^0+\eta$ with both $\eta$ and $\eta_2$ annihilated by $\partial_{x^0}$. In this notation, $\rho= dx^0\eta_2+\eta\eta_2+2a_{0123}dx^0\eta\eta_2\overline\eta_2$ and
\begin{equation}\label{eq:omegaJ}
\Omega_{\cJ}=\eta_2(dx_0^*-\eta^*-a_{0123}\overline\eta_2)(dx_0(\overline\eta_2^*-2a_{0123}\eta)+\eta\overline\eta_2^*)\,.
\end{equation}
T-duality in the topologically trivial direction $x^0$, exchanges even spinors on $(Y,\det)$ with odd spinors on the isomorphic gerby torus $(Y',\det)$ with coordinates $(x_0^*,x^1,x^2,x^3)$ via the fermionic Fourier transform \cite{bem-T-duality}
\[
\hat \rho=\int_{S^1}e^{dx_0^*dx^0}\rho\,.
\]
The identification of $(Y,\det)$ with $(Y',\det)$ shows that spinors of odd degree representing equivariant generalized complex structures on $(Y,\det)$ must be (up to scaling) of the form
\[
\hat \rho= \eta_2 - dx^0\eta\eta_2 + 2a_{0123} \eta\eta_2\overline\eta_2\,.
\]
A straightforward calculation shows that $\hat \rho$ defines an equivariant complex structure $\widehat \cJ$ on $X$ with pure spinor
\begin{equation}\label{eq:omegaJhat}
\Omega_{\widehat \cJ}= \eta_2(dx^0-\eta^*-a_{0123}\overline\eta_2)(dx_0^*(\overline\eta_2^*-2a_{0123}\eta)+\eta\overline\eta_2^*)\,.
\end{equation}
In other words, T-duality in the direction $x^0$ is reflected on $\Xe$ by exchanging $dx^0$ and $dx_0^*$ in \eqref{eq:omegaJ}. Since T-duality is an isomorphism of Courant algebroids \cite{cavalcanti-gualtieri-T-duality}, different choices of $a_{0123}$ lead to generalized complex structures that are intertwined by generalized diffeomorphisms. Therefore, T-duality is a symmetry of the moduli space of $(\cdot,\cdot)$-orthogonal equivariant complex structures on $\Xe$ which identifies the connected component corresponding to equivariant generalized complex structures of type $2$ on $(Y,\det)$ with the component corresponding to structures of type $1$. 

\end{nolabel}

\begin{nolabel}
In the patch where $a_{02}\neq 0$, we can further write $\eta_2=dx^2+\lambda$ with $\eta$ and $\lambda$ spanning the subspace generated by $dx^1$ and $dx^3$. As before, up to infinitesimal generalized diffeomorphisms, we may set $a_{0123}=0$. T-duality in the $x^2$-direction yields
\[
\hat{\hat \rho}=\int_{S^1}e^{(dx_2^*-\frac{1}{2}x^1dx^3+\frac{1}{2}x^3dx^1)dx^2}\hat \rho = e^{\lambda x_2^*+\eta x^0}
\]
thought of as a spinor of even degree on the nilmanifold $N$. Since the even de Rham cohomology of $N$ is generated by
\[
1\,,dx^0dx^1\,,dx^0dx^3\,,dx^1dx_2^*\,,(dx_2^*-\frac{1}{2}x^1dx^3+\frac{1}{2}x^3dx^1)dx^3\, ,dx^0dx^1dx_2^*dx^3\,,
\]
up to diffeomorphisms, all left-invariant symplectic structures on $N$ are of this form. If $a_{02}=0$, then $\hat{\hat \rho}$ defines an equivariant complex structure on $N$, possibly twisted by a B-field. If $a_{13}=0$, then there is no B-field and we obtain the description of the moduli of equivariant complex structures in the Kodaira surface of \cite{borcea-kodaira} (deformations of complex Kodaira surfaces as generalized complex structures are also described in \cite{poon} and \cite{brinzanescu-turcu}).

Regardless of the values of $a_{02}$ and $a_{13}$, each of these structures can be realized on $\Xe$ by swapping $dx^2$ and $dx_2^*-\frac{1}{2}x^1dx^3+\frac{1}{2}x^3dx^1$ in \eqref{eq:omegaJhat} for a suitable $\cJ$. Finally, just as in the case of the gerby torus $(Y,\det)$ described above, T-duality in the $x^0$ direction results into an isomorphism that interchanges the connected components of the moduli space of equivariant generalized complex structures on $N$. These structures can be realized on $\Xe$ by swapping $dx^2$ and $dx_2^*-\frac{1}{2}x^1dx^3+\frac{1}{2}x^3dx^1$ in the expression for $\Omega_{\cJ}$. To summarize, we have the following

\begin{thm*}
The following are isomorphic:
\begin{enumerate}
\item the moduli space of equivariant generalized complex structures on $(Y,\det)$;
\item the moduli space of equivariant generalized complex structures on $N$;
\item the moduli space of equivariant, $(\cdot,\cdot)$-orthogonal complex structures on $\Xe$;
\item the disjoint union of two copies of the moduli space of equivariant complex structures on $Y$.
\end{enumerate}
Each point in any of these isomorphic spaces determines an $N=2$ super-conformal  structure of central charge $c=12$ on $\cHe \simeq \cH_Y \simeq \cH_N$. 
\end{thm*}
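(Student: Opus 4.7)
The plan is to assemble the four-way equivalence by combining bijections already obtained in the paper with the Pl\"ucker-style parametrization worked out immediately above. The equivalence (a)$\Leftrightarrow$(c) is the proposition of \ref{no:complex}: both sides correspond to Manin splittings $\brfge=\fl\oplus\overline\fl$ by isotropic subalgebras, via the trivializations of $T\brXe$ and of the twisted Courant algebroid of $(Y,\det)$. Similarly, (b)$\Leftrightarrow$(c) is the theorem of \ref{no:calabi-yau-nilmanifold}, applied through the global frame \eqref{eq:nilmanifold-framing} of the standard Courant algebroid of $N$, and transitively this yields (a)$\Leftrightarrow$(b). Geometrically, (a)$\Leftrightarrow$(b) can also be realized by successive T-dualities in the directions $x^0$ and $x^2$, which, according to the discussion just above, act on the pure spinor $\Omega_\cJ$ of \eqref{eq:omegaJ} by the swaps $dx^0\leftrightarrow dx_0^*$ and $dx^2\leftrightarrow dx_2^*-\tfrac12 x^1 dx^3+\tfrac12 x^3 dx^1$, and thereby convert an even-degree spinor on $(Y,\det)$ into a mixed-degree spinor on $N$ via the fermionic Fourier transform.

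For (c)$\Leftrightarrow$(d) I would invoke the explicit parametrization already written down above: modulo the gauge freedom of setting $a_{0123}=0$, every invariant pure spinor $\rho$ of even degree on $(Y,\det)$ is of the form $\eta_1\eta_2$ for a decomposable $2$-form satisfying the positivity condition $\eta_1\eta_2\overline\eta_1\overline\eta_2>0$, which is precisely the data of an equivariant ordinary complex structure on $Y$ in the sense of \cite{atiyah-tori}. The odd-degree pure spinors $\hat\rho$, corresponding to type-$1$ generalized complex structures, are related to the even-degree ones by T-duality in the topologically trivial direction $x^0$ and thus parametrize a second disjoint copy of the same moduli space. Since this T-duality is realized on $\Xe$ as the symmetry interchanging the two connected components of the moduli space in (c), the disjoint union description in (d) follows. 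The final $N=2$ super-conformal claim is then an application of Proposition \ref{prop:bialgebras} to the Manin triple $(\brfge,\fl,\overline\fl)$: the resulting super-field $J\in V(\brfge_\super)$ generates an $N=2$ structure of central charge $c=3\dim\fl=12$, which is transported to $\cHe\simeq\cH_Y\simeq\cH_N$ via the $V(\brfge_\super)$-module structures of \ref{no:kummer} and \ref{no:kodaira}; the Example following \ref{no:calabi-yau-nilmanifold} explicitly confirms that T-dual structures induce the same $N=2$ algebra.

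The main obstacle, in my view, is the coherence between T-duality on the $4$-dimensional targets and the realization of complex structures on $\Xe$: one has to verify that the Manin triples extracted from $\Omega_\cJ$ and $\Omega_{\widehat\cJ}$ in \eqref{eq:omegaJ} and \eqref{eq:omegaJhat} really do correspond under the Courant algebroid isomorphism of \cite{cavalcanti-gualtieri-T-duality}, so that the super-currents produced by Proposition \ref{prop:bialgebras} are intertwined by the identifications of Hilbert spaces. Once this compatibility is in hand, the disjointness of the two components of the moduli space and the identification of each with the moduli space of equivariant complex structures on $Y$ follow from the Pl\"ucker description of the hypersurface in $\mathbb{CP}^5$ together with the analysis of the open charts $a_{02}\ne 0$ and $a_{13}\ne 0$ carried out above.
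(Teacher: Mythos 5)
Your proposal is correct and follows essentially the same route as the paper: the identifications (a)$\Leftrightarrow$(c) and (b)$\Leftrightarrow$(c) via Manin splittings of $\fge$ from \ref{no:complex} and \ref{no:calabi-yau-nilmanifold}, the Pl\"ucker/Atiyah parametrization of the even-type component by decomposable $2$-forms $\eta_1\eta_2$ after gauging away $a_{0123}$, T-duality in $x^0$ to reach the odd-type copy, and Proposition \ref{prop:bialgebras} for the $N=2$ structure. The compatibility issue you flag is exactly what the paper resolves through the explicit spinors \eqref{eq:omegaJ} and \eqref{eq:omegaJhat} together with the matching of the twisted Dorfman bracket with the Lie bracket on left-invariant sections of $T\brXe$.
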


\end{nolabel}

\bibliographystyle{unsrt}
\def\cprime{$'$}

\end{document}